\begin{document}

\newenvironment{proof}[1][Proof]{\textbf{#1.} }{\ \rule{0.5em}{0.5em}}

\newtheorem{theorem}{Theorem}[section]
\newtheorem{definition}[theorem]{Definition}
\newtheorem{lemma}[theorem]{Lemma}
\newtheorem{remark}[theorem]{Remark}
\newtheorem{proposition}[theorem]{Proposition}
\newtheorem{corollary}[theorem]{Corollary}
\newtheorem{example}[theorem]{Example}

\numberwithin{equation}{section}
\newcommand{\ep}{\varepsilon}
\newcommand{\R}{{\mathbb  R}}
\newcommand\C{{\mathbb  C}}
\newcommand\Q{{\mathbb Q}}
\newcommand\Z{{\mathbb Z}}
\newcommand{\N}{{\mathbb N}}

\newcommand{\bfi}{\bfseries\itshape}

\newsavebox{\savepar}
\newenvironment{boxit}{\begin{lrbox}{\savepar}
\begin{minipage}[b]{15.5cm}}{\end{minipage}\end{lrbox}
\fbox{\usebox{\savepar}}}

\title{{\bf Dynamical systems with a prescribed attracting set and applications to conservative dynamics}}
\author{R\u{a}zvan M. Tudoran}

\date{}
\maketitle \makeatother

\begin{abstract}
We provide an explicit method to construct dynamical systems which admit an a-priori prescribed attracting set. As application, we provide a method to construct perturbations of conservative dynamical systems, which admit an a-priori prescribed leafwise attracting set.
\end{abstract}

\medskip

\textbf{MSC 2010}: 34C45; 37J15; 70H33.

\textbf{Keywords}: attracting sets; invariant manifolds; stabilization; conservative dynamics.

\section{Introduction}
\label{section:one}

The aim of this article is to provide an explicit method to construct dynamical systems which admit an arbitrary a-priori prescribed attracting set, i.e., a closed and invariant set which attracts every bounded positive orbit of the dynamical system. As application, we give an answer to the following problem: given a conservative $n-$dimensional dynamical system (i.e., a dynamical system which admits a $(k+p)-$dimensional vector type first integral, where $k+p < n$) and an invariant set $\mathcal{S}$ (given as the level set of a $k-$dimensional first integral defined by some $k-$dimensional projection of the original $(k+p)-$dimensional first integral), construct a curve of dynamical systems starting from the original system, such that each system on this curve is still conservative (admitting the $p-$dimensional first integral which together with the $k-$dimensional first integral, forms the original $(k+p)-$dimensional first integral), keeps invariant the set $\mathcal{S}\cap \operatorname{Mrk}$ (where $\operatorname{Mrk}$ is the open set consisting of the points where the rank of the $(k+p)-$dimensional first integral is maximal), and moreover, the intersection of $\mathcal{S}\cap \operatorname{Mrk}$ with each level set (corresponding to regular values) of the $p-$dimensional first integral, is an attracting set of each system on the curve (except for the original system) restricted to the corresponding level set. 

More precisely, in the second section of this article we recall a result from \cite{tudoran} which provides an explicit method to construct the class of smooth vector fields (defined on a smooth Riemannian manifold) which admit an a-priori given set of first integrals and, in the same time, dissipate a given set of scalar quantities, with a-priori defined dissipation rates. The third section represent the main part of this work, and gives an explicit method to construct a dynamical system which admits an a-priori defined attracting set. The only requirement needed in order to construct the vector field associated to a given closed subset of $\mathbb{R}^n$, is to know a representation of this set as a level set of some smooth function. Consequently, since any closed subset of $\mathbb{R}^n$ can be expressed as a level set of some smooth function, this method makes possible the construction of a vector field which have as attracting set a general a-priori prescribed closed subset of $\mathbb{R}^n$. The fourth section gives an application of the results given in the previous section, to construct leafwise attracting sets for perturbations of conservative dynamical systems. More precisely, let $\mathfrak{S}$ be a given dynamical system (defined on an open subset $U\subseteq\mathbb{R}^n$) which admits $k+p$ smooth first integrals, $I_1,\dots, I_k, D_1,\dots, D_p$ (or equivalently, it admits two vector type first integrals, $I:=(I_1,\dots, I_k)$ and $D:=(D_1,\dots, D_p)$). Let $\Sigma^{D_1,\dots, D_p}_{d_1,\dots, d_p}$ be a dynamically invariant set of $\mathfrak{S}$, given by the level set of the vector type first integral $D$ corresponding to some (regular or singular) value $d:=(d_1,\dots,d_p)\in\operatorname{Im}(D)$. Starting with these data, we construct a smooth family of dynamical systems $\{\mathfrak{S}_{\lambda}\}_{\lambda \geq 0}$ (defined on the open subset $\operatorname{Mrk}((D,I))\subseteq U$ consisting of the points of maximum rank of the smooth function $(D,I)$), such that $\mathfrak{S}_{0} =\mathfrak{S}|_{\operatorname{Mrk}((D,I))}$, and for all $\lambda >0$, the associated dynamical system, $\mathfrak{S}_{\lambda}$, admits also the vector type first integral $I|_{\operatorname{Mrk}((D,I))}$, keeps dynamically invariant the set $\Sigma^{D_1,\dots, D_p}_{d_1,\dots, d_p}\cap\operatorname{Mrk}((D,I))$ and moreover, the invariant set $\Sigma^{D_1,\dots, D_p}_{d_1,\dots, d_p}\cap (I|_{\operatorname{Mrk}((D,I))})^{-1}(\{\mu\})$ (if not void) is an attracting set of $\mathfrak{S}_{\lambda}|_{(I|_{\operatorname{Mrk}((D,I))})^{-1}(\{\mu\})}$, for every regular value  $\mu\in\operatorname{Im}(I|_{\operatorname{Mrk}((D,I))})$. In particular, if $\mu$ is a regular value of $I|_{\operatorname{Mrk}((D,I))}$ such that the intersection of some connected components of $(I|_{\operatorname{Mrk}((D,I))})^{-1}(\{\mu\})$ with the invariant set $\Sigma^{D_1,\dots, D_p}_{d_1,\dots, d_p}\cap\operatorname{Mrk}((D,I))$, contain a single orbit of the dynamical system $\mathfrak{S}$, e.g., equilibrium point, periodic orbit, homoclinic or heteroclinic cycle (if any such $\mu$ exists), then these orbits preserve their nature as orbits of the dynamical system $\mathfrak{S}_{\lambda}$ (for each $\lambda >0$), and moreover they attract every bounded positive orbit of the dynamical system $\mathfrak{S}_{\lambda}|_{(I|_{\operatorname{Mrk}((D,I))})^{-1}(\{\mu\})}$ sharing the same connected component. In the case of equilibrium points, these become asymptotically stable, as equilibrium states of the dynamical system $\mathfrak{S}_{\lambda}|_{(I|_{\operatorname{Mrk}((D,I))})^{-1}(\{\mu\})}$. The aim of the last section of this article is to present the correspondents of the main results of the previous section, in the case when the invariant set $\Sigma^{D_1,\dots, D_p}_{d_1,\dots, d_p}\cap \operatorname{Mrk}((D,I))$ is foliated by the level sets of regular values of the vector type first integral $D^{p^{\prime}\rightarrow }|_{\operatorname{Mrk}((D,I))}:=({D_{p^{\prime}+1}}|_{\operatorname{Mrk}((D,I))},\dots, {D_{p}}|_{\operatorname{Mrk}((D,I))})$, where $p^{\prime}$ is a natural number, such that $0<p^{\prime}<p$.

\section{Dynamical systems with prescribed conserved and dissipated scalar quantities}

In this section we recall a result from \cite{tudoran} which provides a constructive method to obtain the class of smooth vector fields defined on a smooth Riemannian manifold, which admits an a-priori given set of first integrals and, in the same time, dissipate a given set of scalar quantities, with a-priori defined dissipation rates.

More precisely, we have the following result, which is the key ingredient to obtain the main results of this article.

\begin{theorem}[\cite{tudoran}]\label{MTD}
Let $(M,g)$ be an $n$-dimensional smooth Riemannian manifold, and fix $k,p\in \mathbb{N}$ two natural numbers such that $p>0$ and $0<k+p\leq n$. Let $h_1, \dots, h_p \in \mathcal{C}^{\infty}(U,\mathbb{R})$ be a given set of smooth functions defined on an open subset $U\subseteq M$, and respectively let $I_1,\dots, I_k, D_1,\dots, D_p\in \mathcal{C}^{\infty}(U,\mathbb{R})$ be given, such that $$\{\nabla_{g} I_1 ,\dots,\nabla_{g} I_k , \nabla_{g} D_1 , \dots,\nabla_{g} D_p\}\subset \mathfrak{X}(U)$$ form a set of pointwise linearly independent vector fields on $U$.

Then the set of solutions $X\in \mathfrak{X}(U)$ of the system
\begin{equation}\label{EQA}
\begin{split}
\left\{\begin{array}{l}
\mathcal{L}_{X}I_1= \dots = \mathcal{L}_{X}I_k = 0,\\
\mathcal{L}_{X}D_1 = h_1, \dots, \mathcal{L}_{X}D_p = h_p,\\
\end{array}\right.
\end{split}
\end{equation}
forms the affine distribution
$$
\mathfrak{A}[X_0;\nabla_{g} I_1 ,\dots,\nabla_{g} I_k , \nabla_{g} D_1 , \dots, \nabla_{g} D_p ]=X_0 + \mathfrak{X}[\nabla_{g} I_1,\dots, \nabla_{g} I_k, \nabla_{g} D_1, \dots, \nabla_{g} D_p],
$$
locally generated by the set of vector fields
$$
\left\{X_0\right\}\biguplus \left\{\star\left( \bigwedge_{i=1, i\neq a}^{n-(k+p)} Z_i \wedge \bigwedge_{j=1}^{p} \nabla_{g} D_j \wedge\bigwedge_{l=1}^{k} \nabla_{g} I_l \right): a\in\{1,\dots, n-(k+p)\right\}
$$
where
$$
X_{0}=\left\| \bigwedge_{i=1}^{p} \nabla_{g} D_i\wedge\bigwedge_{j=1}^{k} \nabla_{g} I_j \right\|_{k+p}^{-2}\cdot\sum_{i=1}^{p}(-1)^{n-i}h_i \Theta_i, 
$$
$$
\Theta_i = \star\left[ \bigwedge_{j=1, j\neq i}^{p} \nabla_{g} D_j \wedge \bigwedge_{l=1}^{k} \nabla_{g} I_l  \wedge\star\left(\bigwedge_{j=1}^{p} \nabla_{g} D_j\wedge\bigwedge_{l=1}^{k} \nabla_{g} I_l \right)\right],
$$
and respectively the set of locally defined vector fields $$\{\nabla_{g} I_1,\dots, \nabla_{g} I_k, \nabla_{g} D_1, \dots, \nabla_{g} D_p, Z_1, \dots, Z_{n-(k+p)}\}$$ forms a moving frame. The notation $\star$ stands for the Hodge star operator on multivector fields, and $\mathcal{L}_{X}F:=g(\nabla_{g} F, X)$ stands for the Lie derivative of the smooth function $F\in\mathcal{C}^{\infty}(U,\mathbb{R})$ along the vector field $X$. 
\end{theorem}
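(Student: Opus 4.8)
The plan is to observe that, despite its first-order appearance, the system \eqref{EQA} is purely pointwise and linear in the unknown: since $\mathcal{L}_{X}F = g(\nabla_{g}F,X)$ involves no derivatives of $X$, at each point $x\in U$ the conditions read $g_x(\nabla_{g}I_l,X(x))=0$ and $g_x(\nabla_{g}D_j,X(x))=h_j(x)$, i.e. $k+p$ affine constraints on the single tangent vector $X(x)\in T_xM$. Because $\{\nabla_{g}I_1,\dots,\nabla_{g}I_k,\nabla_{g}D_1,\dots,\nabla_{g}D_p\}$ is pointwise linearly independent and $g$ is nondegenerate, the evaluation map $X(x)\mapsto(g_x(\nabla_{g}I_l,X(x)),g_x(\nabla_{g}D_j,X(x)))\in\R^{k+p}$ is surjective; hence the solution set at $x$ is a nonempty affine subspace of $T_xM$ of dimension $n-(k+p)$. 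The theorem therefore splits into two independent tasks: identifying the homogeneous solution space with the span of the listed Hodge-star vector fields, and exhibiting $X_0$ as one particular solution.

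For the homogeneous part I would use the defining orthogonality property of the Hodge star: for any $n-1$ vector fields $w_1,\dots,w_{n-1}$ the vector field $\star(w_1\wedge\cdots\wedge w_{n-1})$ is $g$-orthogonal to every $w_s$, since $g(w_s,\star(w_1\wedge\cdots\wedge w_{n-1}))=\pm\star(w_s\wedge w_1\wedge\cdots\wedge w_{n-1})=0$ by repetition of $w_s$. Each generator is the star of an $(n-1)$-vector (the degrees add up to $(n-(k+p)-1)+p+k=n-1$), and taking the $w_s$ from the moving frame shows it is orthogonal to all the gradients, hence solves the homogeneous system. There are exactly $n-(k+p)$ of them, matching the dimension count, and they are pointwise linearly independent because the $(n-1)$-vectors $\bigwedge_{i\neq a}Z_i\wedge\bigwedge_j\nabla_{g}D_j\wedge\bigwedge_l\nabla_{g}I_l$ are distinct $(n-1)$-fold products of the frame, hence part of a basis of $\Lambda^{n-1}$, and $\star$ is a pointwise isomorphism. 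They thus form a local frame for $\mathfrak{X}[\nabla_{g}I_1,\dots,\nabla_{g}I_k,\nabla_{g}D_1,\dots,\nabla_{g}D_p]$.

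The heart of the argument is verifying $X_0$. Writing $\Omega:=\bigwedge_j\nabla_{g}D_j\wedge\bigwedge_l\nabla_{g}I_l$ and $\alpha_i:=\bigwedge_{j\neq i}\nabla_{g}D_j\wedge\bigwedge_l\nabla_{g}I_l$, so that $\Theta_i=\star(\alpha_i\wedge\star\Omega)$, I would repeatedly apply the contraction identity $g(v,\star W)=\pm\star(v\wedge W)$ — a consequence of $\beta\wedge\star\gamma=\langle\beta,\gamma\rangle\,\mathrm{vol}$ and $\star\star=\pm\mathrm{id}$ — together with the normalization $\Omega\wedge\star\Omega=\|\Omega\|_{k+p}^{2}\,\mathrm{vol}$. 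Wedging $\nabla_{g}I_l$ or $\nabla_{g}D_m$ with $m\neq i$ onto $\alpha_i$ introduces a repeated factor and kills the expression, giving $g(\nabla_{g}I_l,\Theta_i)=0$ and $g(\nabla_{g}D_m,\Theta_i)=0$ for $m\neq i$; wedging $\nabla_{g}D_i$ reconstructs $\Omega$ up to the sign $(-1)^{i-1}$ from the reordering $\nabla_{g}D_i\wedge\alpha_i=(-1)^{i-1}\Omega$, whence $g(\nabla_{g}D_i,\Theta_i)=\pm\,\|\Omega\|_{k+p}^{2}$. Substituting these into $X_0=\|\Omega\|_{k+p}^{-2}\sum_i(-1)^{n-i}h_i\Theta_i$ collapses each $\mathcal{L}_{X_0}D_m$ to its single surviving term and each $\mathcal{L}_{X_0}I_l$ to zero.

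I expect the main obstacle to be the sign bookkeeping: the prefactors $(-1)^{n-i}$ in $X_0$ exist precisely to absorb the signs coming from the reordering $\nabla_{g}D_i\wedge\alpha_i=(-1)^{i-1}\Omega$, from the two applications of $\star$ in $g(v,\star W)=\pm\star(v\wedge W)$, and from $\star\,\mathrm{vol}=\pm1$, all of which depend on $n$, on the degree $k+p$, and on the chosen orientation and metric conventions. The substantive work is to track these consistently and confirm $g(\nabla_{g}D_m,\Theta_m)=(-1)^{n-m}\|\Omega\|_{k+p}^{2}$, so that the normalization forces $\mathcal{L}_{X_0}D_m=h_m$ exactly (and not some wrong sign). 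Once $X_0$ is confirmed as a particular solution, the full solution set is $X_0+\mathfrak{X}[\nabla_{g}I_1,\dots,\nabla_{g}I_k,\nabla_{g}D_1,\dots,\nabla_{g}D_p]$, which is exactly the claimed affine distribution generated by $\{X_0\}$ together with the $n-(k+p)$ Hodge-star vector fields.
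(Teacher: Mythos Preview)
The paper does not actually prove this theorem: it is stated with the citation \cite{tudoran} and explicitly introduced as a result ``recall[ed]'' from that reference, so there is no proof in the present paper to compare against. Your outline is the natural approach and is essentially what the proof in the cited paper must do: the system \eqref{EQA} is pointwise affine in $X$ because $\mathcal{L}_X F=g(\nabla_g F,X)$, the homogeneous solutions form the $g$-orthogonal complement of $\mathrm{span}\{\nabla_g I_l,\nabla_g D_j\}$, the Hodge-star generators are orthogonal to every factor they contain and hence span that complement, and $X_0$ is checked by the identity $g(v,\star W)\,\mathrm{vol}=\pm\, v\wedge W$ together with $\nabla_g D_i\wedge\alpha_i=(-1)^{i-1}\Omega$.

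One point worth tightening: you correctly flag the sign bookkeeping as the only nontrivial step, but your sketch stops short of actually carrying it out. The claim $g(\nabla_g D_m,\Theta_m)=(-1)^{n-m}\|\Omega\|_{k+p}^2$ is exactly what is needed, and to justify it you must fix once and for all the conventions for $\star$ on multivectors (in particular the sign in $\star\star$ on $q$-vectors and in $\star\,\mathrm{vol}$) and then track the three sign contributions you list. As written, the proposal asserts that the $(-1)^{n-i}$ prefactor absorbs everything but does not exhibit the cancellation; a referee would ask you to display at least the computation of $g(\nabla_g D_m,\Theta_i)$ for $m=i$ explicitly. Apart from this, the argument is complete.
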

Note that in contrast with the vector fields $\nabla_{g} I_1 ,\dots,\nabla_{g} I_k , \nabla_{g} D_1 , \dots,\nabla_{g} D_p$, which are globally defined on $U$, the vector fields $Z_1, \dots, Z_{n-(k+p)}$ exist in general only locally around each point $x\in U$, in some open neighborhood $U_x \subseteq U$. Nevertheless, the equations \eqref{EQA} have a globally defined particular solution in $U$, given by the vector field $X_0$. Moreover, if $X$ is a vector field which conserves $I_1,\dots, I_k, D_1,\dots, D_p$ (i.e., $X$ is a solution of the homogeneous system associated to \eqref{EQA}), then $X+X_0$ is a global solution of \eqref{EQA}. More precisely, we have the following result from \cite{tudoran}.
\begin{theorem}[\cite{tudoran}]\label{thmdin}
Let $\dot x =X(x)$ be the dynamical system generated by a vector field $X\in\mathfrak{X}(U)$ which conserves the smooth functions $I_1,\dots, I_k, D_1,\dots, D_p\in \mathcal{C}^{\infty}(U,\mathbb{R})$. Assume that $\nabla_g I_1,\dots, \nabla_g I_k, \nabla_g D_1,\dots, \nabla_g D_p$ are pointwise linearly independent on some open subset $V\subseteq U$.

Then the perturbed dynamical system
$$
\dot x= X(x)+ X_{0}(x),~  x\in V,
$$
with $X_0 \in\mathfrak{X}(V)$ given in Theorem \eqref{MTD}, is a dissipative dynamical system, generated by the dissipative vector field $X+X_0 \in\mathfrak{X}(V)$ which conserves $I_1,\dots, I_k$ (i.e., $\mathcal{L}_{X+X_0}I_1= \dots = \mathcal{L}_{X+X_0}I_k = 0$) and dissipates $D_1,\dots, D_p$ with (corresponding) dissipation rates $h_1,\dots, h_p$ (i.e., $\mathcal{L}_{X+X_0}D_1=h_1,\dots,\mathcal{L}_{X+X_0}D_p = h_p$).
\end{theorem}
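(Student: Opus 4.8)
The plan is to exploit the $\mathbb{R}$-linearity of the Lie derivative in its vector-field argument. Since by definition $\mathcal{L}_{Y}F = g(\nabla_{g}F, Y)$ for any smooth function $F$ and any vector field $Y$, bilinearity of the Riemannian metric immediately yields the decomposition
\begin{equation*}
\mathcal{L}_{X+X_0}F = g(\nabla_{g}F, X+X_0) = g(\nabla_{g}F, X) + g(\nabla_{g}F, X_0) = \mathcal{L}_{X}F + \mathcal{L}_{X_0}F
\end{equation*}
for every $F\in\mathcal{C}^{\infty}(V,\mathbb{R})$. This reduces the whole statement to understanding the separate actions of $X$ and of $X_0$ on the functions $I_1,\dots, I_k$ and $D_1,\dots, D_p$.

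First I would invoke the hypothesis on $X$: by assumption the vector field $X$ conserves all of $I_1,\dots, I_k, D_1,\dots, D_p$, which means precisely that $\mathcal{L}_{X}I_j = 0$ for each $j\in\{1,\dots,k\}$ and $\mathcal{L}_{X}D_i = 0$ for each $i\in\{1,\dots,p\}$. In other words, $X$ is a solution of the homogeneous system associated with \eqref{EQA} on $V$. Next I would record the corresponding action of $X_0$. Since $\nabla_{g}I_1,\dots,\nabla_{g}I_k,\nabla_{g}D_1,\dots,\nabla_{g}D_p$ are pointwise linearly independent on $V$, the hypotheses of Theorem \ref{MTD} are met over $V$, and $X_0\in\mathfrak{X}(V)$ is by construction the globally defined particular solution of the inhomogeneous system \eqref{EQA}; hence $\mathcal{L}_{X_0}I_j = 0$ for all $j$ and $\mathcal{L}_{X_0}D_i = h_i$ for all $i$.

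Combining these two ingredients through the linearity identity then gives $\mathcal{L}_{X+X_0}I_j = 0 + 0 = 0$ for each $j\in\{1,\dots,k\}$, so $X+X_0$ still conserves $I_1,\dots, I_k$, and $\mathcal{L}_{X+X_0}D_i = 0 + h_i = h_i$ for each $i\in\{1,\dots,p\}$, so $X+X_0$ dissipates $D_1,\dots, D_p$ with the prescribed rates $h_1,\dots, h_p$. The latter family of identities is exactly the assertion that the perturbed system is dissipative in the stated sense, which closes the argument.

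I do not expect any genuine obstacle here: the entire content is the elementary linearity of $\mathcal{L}_{(\cdot)}F$ together with the fact, already guaranteed by Theorem \ref{MTD} and the subsequent remark, that $X_0$ solves \eqref{EQA}. The only point requiring care is confirming that the regularity hypothesis (pointwise linear independence of the gradients) holds on $V$ so that $X_0$ is well defined there as a solution of \eqref{EQA}; once this is in place, the result follows as a direct corollary of Theorem \ref{MTD}.
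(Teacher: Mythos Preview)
Your argument is correct: linearity of the Lie derivative in the vector-field slot, together with the fact that $X$ solves the homogeneous system and $X_0$ solves the inhomogeneous system \eqref{EQA}, immediately gives the claimed identities. Note, however, that the paper does not supply its own proof of this statement; it is quoted as a result from \cite{tudoran}, so there is no proof in the paper to compare against. Your write-up is exactly the natural justification one would expect and matches the informal explanation the paper gives just before the theorem (``if $X$ is a vector field which conserves $I_1,\dots,D_p$, then $X+X_0$ is a global solution of \eqref{EQA}'').
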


\section{Dynamical systems with prescribed attracting set}

This section is the main part of this paper and gives an explicit method to construct a dynamical system which admits an a-priori defined attracting set. The only requirement needed in order to construct the vector field associated to a given closed subset of $\mathbb{R}^n$, is to know a representation of this set as the level set of some smooth function. Consequently, since any closed subset of $\mathbb{R}^n$ can be expressed as a level set of some smooth function, this method makes possible the construction of a vector field which have as attracting set a general a-priori prescribed closed subset of $\mathbb{R}^n$.

Let us start by fixing a natural number $1\leq p\leq n$, and a closed subset of $\mathbb{R}^n$ given by $$\Sigma^{D_1,\dots,D_p}_{d_1,\dots,d_p}:=D^{-1}(\{(d_1,\dots,d_p)\})\subset U\subseteq \mathbb{R}^n,$$ where $U\subseteq \mathbb{R}^n$ is an open subset, $D:=(D_1,\dots,D_p) : U\rightarrow \mathbb{R}^p$ is a smooth function, and $(d_1,\dots,d_p)\in\mathbb{R}^p$ is some point in the image of $D$. Note that if $(d_1,\dots,d_p)\in\mathbb{R}^p$ is a regular value of $D$, then $\Sigma^{D_1,\dots,D_p}_{d_1,\dots,d_p}$ is a smooth $(n-p)$-dimensional submanifold of $\mathbb{R}^n$, and hence for every $x\in\Sigma^{D_1,\dots,D_p}_{d_1,\dots,d_p}$, the vectors $\nabla D_1 (x),\dots,\nabla D_p (x)$ are linearly independent, where $\nabla$ stands for the gradient operator associated with respect to the canonical inner product on $\mathbb{R}^n$. By Sard's theorem we know that almost all points in the image of $D$ are regular values, i.e., the set of singular values of $D$ is a set of Lebesgue measure zero in $\mathbb{R}^p$. Let us denote by $\operatorname{Mrk}(D)\subseteq U$ the set of maximal rank points of $D$, i.e., the points $x\in U$ such that the vectors $\nabla D_1 (x),\dots,\nabla D_p (x)$ are linearly independent. Recall that $\operatorname{Mrk}(D)$ is an open subset of $U$ contained in the set of regular points of $D$. Recall that a point $x_0 \in U$ is a regular point of $D$ if there exists an open neighborhood $U_{x_0}\subseteq U$ such that $ \operatorname{rank}(\mathrm{d}D (x))=\operatorname{rank}(\mathrm{d}D (x_0))$, for all $x\in U_{x_0}$. Recall also that the set of regular points of $D$ is an open dense subset of $U$ in contrast with $\operatorname{Mrk}(D)$ which is open but not necessarily dense. The rank of $\mathrm{d}D (\cdot)$ is constant on each connected component of the set of regular points of $D$. Concerning the set $\Sigma^{D_1,\dots,D_p}_{d_1,\dots,d_p}$, if $(d_1,\dots,d_p)$ is a regular value of $D$, then $\Sigma^{D_1,\dots,D_p}_{d_1,\dots,d_p} \subset \operatorname{Mrk}(D)$.
\medskip

Before stating the main theorem of this section, let us recall briefly some terminology and also some classical results we shall need in the sequel. For details see e.g., \cite{hartman}, \cite{verhulst}. In order to do that, let us consider a smooth vector field $X\in\mathfrak{X}(U)$ defined on an open set $U\subseteq\mathbb{R}^n$. Then, for each $\overline{x}\in U$ we shall denote by $t\in I_{\overline{x}}\subseteq \mathbb{R}\mapsto x(t;\overline{x})\in U$ the integral curve of $X$ starting from $\overline{x}$ at $t=0$, i.e., the solution of the Cauchy problem ${\mathrm{d}x}/{\mathrm{d}t}=X(x(t))$, $x(0)=\overline{x}$, defined on the maximal domain $I_{\overline{x}}\subseteq \mathbb{R}$, where $I_{\overline{x}}$ is an open interval of $\mathbb{R}$ containing the origin. For each $\overline{x}\in U$ we associate the set $\mathcal{O}^{+}_{\overline{x}}:=\{y\in U : y=x(t;\overline{x}), ~ t\geq 0\}$, called \textit{the positive orbit} of $\overline{x}$. Consequently, a subset $\mathcal{S}\subseteq U$ is called \textit{positively invariant} if for every $\overline{x}\in\mathcal{S}$ we have that $\mathcal{O}^{+}_{\overline{x}}\subseteq \mathcal{S}$. If a set $\mathcal{S}$ is positively invariant, then so are the sets $\overline{\mathcal{S}}$ and $\mathring{\mathcal{S}}$. Let us recall that if $\mathcal{O}^{+}_{\overline{x}}$ is contained in some compact subset of $U$, then the solution $x(t;\overline{x})$ is defined for all $t\in[0,\infty)$. If one denotes by $\omega(\overline{x}):=\{y\in U: (\exists)(t_{n})_{n\in\mathbb{N}}\subset [0,\infty),~t_{n} < t_{n+1},~{t_n}\rightarrow {\infty}~\text{s.t.}~\lim_{n\rightarrow \infty}x(t_n;\overline{x})=y\}$ the $\omega-$\textit{limit set} of $\overline{x}$, then we have that $\overline{\mathcal{O}^{+}_{\overline{x}}}=\mathcal{O}^{+}_{\overline{x}}\cup\omega(\overline{x})$, and $\omega(\overline{x})=\omega(x(t;\overline{x}))$, for all $t\geq 0$. Note that for all points $y\in\omega(\overline{x})$, we have that $\mathcal{O}^{+}_{y}\subseteq \omega(\overline{x})$, and hence the $\omega-$limit set of $\overline{x}$ can be expressed as $\omega(\overline{x})=\bigcap\{\overline{\mathcal{O}^{+}_{y}}: y\in\mathcal{O}^{+}_{\overline{x}}\}$. Moreover, for every $\overline{x}\in U$ such that the set $\{x(t;\overline{x}): t\geq 0\}$ is bounded, the associated $\omega-$limit set, $\omega(\overline{x})$, is a nonempty, invariant, compact and connected subset of $U$, and $x(t;\overline{x})$ approaches $\omega(\overline{x})$ for $t\rightarrow \infty$, i.e., $x(t;\overline{x})\rightarrow \omega(\overline{x})$ for $t\rightarrow \infty$. Recall that given a closed and invariant set $\mathcal{S}\subset U$, we say that the solution starting from a point $\overline{x}\in U$ \textit{approaches} the set $\mathcal{S}$ (and we denote $x(t;\overline{x})\rightarrow \mathcal{S}$), if for every $\varepsilon >0$ there exists $T>0$ such that $\operatorname{dist}(x(t;\overline{x}),\mathcal{S})<\varepsilon$, for all $t>T$, where for every point $p\in U$, $\operatorname{dist}(p,\mathcal{S}):=\inf_{x\in \mathcal{S}}\operatorname{dist}(p,x)$. In what follows, in order to show that a solution starting from a point $\overline{x}\in U$ approaches some closed and invariant set $\mathcal{S}$ as $t\rightarrow \infty$, we shall prove that $\omega(\overline{x})\subseteq \mathcal{S}$, and then using the attracting property of an $\omega-$limit set, i.e., $x(t;\overline{x})\rightarrow \omega(\overline{x})$ for $t\rightarrow \infty$, we get that $x(t;\overline{x})\rightarrow \mathcal{S}$ for $t\rightarrow \infty$.

\begin{definition}
Let $U\subset\mathbb{R}^n$ be an open subset of $\mathbb{R}^n$ and let $X\in\mathfrak{X}(U)$ be a smooth vector field that admits at least one bounded positive orbit. A closed and invariant subset $\mathcal{A}\subset U$ will be called \textbf{attracting set} of the dynamical system generated by $X$ if for \textbf{every} point $\overline{x}\in U$ such that the set $\{x(t;\overline{x}): t\geq 0\}$ is bounded, the integral curve of $X$ starting from $\overline{x}$ approaches $\mathcal{A}$ as $t\rightarrow \infty$, i.e., $x(t;\overline{x})\rightarrow \mathcal{A}$ for $t\rightarrow \infty$.
\end{definition}
Next result points out an important property of attracting sets.
\begin{remark}\label{rem1}
Let $\mathcal{A}\subset U$ be an attracting set of the dynamical system generated by a smooth vector field $X\in\mathfrak{X}(U)$. Then for every positively invariant compact set $K\subset U$ (if any), and every point $\overline{x}\in K$, the integral curve of $X$ starting from $\overline{x}$ approaches $\mathcal{A}$ as $t\rightarrow \infty$.
\end{remark}

In the above settings, we shall construct a smooth vector field $X$ defined on the open set $\operatorname{Mrk}(D)\subseteq U$, whose set of equilibrium points equals $\Sigma^{D_1,\dots,D_p}_{d_1,\dots,d_p}\cap \operatorname{Mrk}(D)$, and moreover, this set is an attracting set of $X$, i.e., for every $\overline{x} \in\operatorname{Mrk}(D)$, such that the set $\{x(t;\overline{x}): t\geq 0\}$ is bounded, we have that $x(t;\overline{x})\rightarrow \Sigma^{D_1,\dots,D_p}_{d_1,\dots,d_p}\cap \operatorname{Mrk}(D)$ for $t\rightarrow \infty$. Note that if $(d_1,\dots,d_p)$ is a regular value of $D=(D_1,\dots,D_p)$, then $\Sigma^{D_1,\dots,D_p}_{d_1,\dots,d_p} \subset \operatorname{Mrk}(D)$, and hence in this case, the vector field $X$ admits the attracting set $\Sigma^{D_1,\dots,D_p}_{d_1,\dots,d_p}$.

In order to do that, let us fix a strictly positive real number $\lambda >0$. Then using the Theorem \eqref{MTD}, we construct a smooth vector field $X\in\mathfrak{X}(\operatorname{Mrk}(D))$ such that 
\begin{equation}\label{SVF}
\mathcal{L}_{X}D_1 = (-\lambda)(D_1 - d_1), \dots, \mathcal{L}_{X}D_p = (-\lambda)(D_p - d_p).
\end{equation}
Note that by construction, $\Sigma^{D_1,\dots,D_p}_{d_1,\dots,d_p}\cap \operatorname{Mrk}(D)$ is a dynamically invariant set of $X$. By Theorem \eqref{MTD}, a particular solution of the equation \eqref{SVF} with maximal domain of definition, is given by the vector field $X_{0}^{\lambda} \in\mathfrak{X}(\operatorname{Mrk}(D))$ defined as follows:
\begin{equation*}
X_{0}^{\lambda}=\left\| \bigwedge_{i=1}^{p} \nabla D_i\right\|_{p}^{-2}\cdot\sum_{i=1}^{p}(-1)^{n-i}(-\lambda)(D_i - d_i)\Theta_i, 
\end{equation*}
where
$$
\Theta_i = \star\left[ \bigwedge_{j=1, j\neq i}^{p} \nabla D_j \wedge\star\left(\bigwedge_{j=1}^{p} \nabla D_j\right)\right].
$$

Let us state now the main result of this section.
\begin{theorem}\label{mainTHM}
Let $\mathcal{S}\subset\mathbb{R}^n$ be a nonempty closed subset of $\mathbb{R}^n$. Let $1\leq p\leq n$ be a natural number, $U\subseteq \mathbb{R}^n$ an open subset of $\mathbb{R}^n$, and let $D:=(D_1,\dots,D_p) : U\rightarrow \mathbb{R}^p$ be a smooth function such that $\mathcal{S}=\Sigma^{D_1,\dots,D_p}_{d_1,\dots,d_p}:=D^{-1}(\{(d_1,\dots,d_p)\})\subset U\subseteq \mathbb{R}^n$, for some point $(d_1,\dots,d_p)\in\mathbb{R}^p$ in the image of $D$. Assume that $\Sigma^{D_1,\dots,D_p}_{d_1,\dots,d_p}\cap \operatorname{Mrk}(D)\neq \emptyset$.

Then for each real number $\lambda >0$, one can associate a smooth vector field $X_{0}^{\lambda} \in\mathfrak{X}(\operatorname{Mrk}(D))$ given by  
\begin{equation}\label{XO}
X_{0}^{\lambda}=\left\| \bigwedge_{i=1}^{p} \nabla D_i\right\|_{p}^{-2}\cdot\sum_{i=1}^{p}(-1)^{n-i}(-\lambda)(D_i - d_i)\Theta_i, 
\end{equation}
where
$$
\Theta_i = \star\left[ \bigwedge_{j=1, j\neq i}^{p} \nabla D_j \wedge\star\left(\bigwedge_{j=1}^{p} \nabla D_j\right)\right], ~ i\in\{1,\dots,p\},
$$
such that the following statements hold true.
\begin{itemize}
\item [(a)] The set of the equilibrium states of the vector field $X_{0}^{\lambda}\in\mathfrak{X}(\operatorname{Mrk}(D))$, i.e., $\mathcal{E}(X_{0}^{\lambda}):=\{x\in\operatorname{Mrk}(D) : X_{0}^{\lambda} (x)=0\}$, is given by $\mathcal{E}(X_{0}^{\lambda})=\Sigma^{D_1,\dots,D_p}_{d_1,\dots,d_p}\cap \operatorname{Mrk}(D)$.
\item [(b)] The vector field $X_{0}^{\lambda}\in\mathfrak{X}(\operatorname{Mrk}(D))$ admits the attracting set  $\Sigma^{D_1,\dots,D_p}_{d_1,\dots,d_p}\cap \operatorname{Mrk}(D)$. More precisely, for every $\overline{x} \in\operatorname{Mrk}(D)$, such that the set $\{x(t;\overline{x}): t\geq 0\}$ is bounded, $x(t;\overline{x})\rightarrow \Sigma^{D_1,\dots,D_p}_{d_1,\dots,d_p}\cap \operatorname{Mrk}(D)$ for $t\rightarrow \infty$.
\end{itemize}
\end{theorem}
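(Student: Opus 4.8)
The plan is to derive both statements from the single identity that Theorem \eqref{MTD} guarantees for the vector field \eqref{XO}, namely
\[
\mathcal{L}_{X_{0}^{\lambda}}D_i = g\left(\nabla D_i , X_{0}^{\lambda}\right) = (-\lambda)(D_i - d_i), \quad i\in\{1,\dots,p\},
\]
which is exactly \eqref{SVF} (this is the specialization of Theorem \eqref{MTD} to the case $k=0$, with the prescribed dissipation rates $h_i = -\lambda(D_i - d_i)$). Part (a) will follow at once. For the inclusion $\Sigma^{D_1,\dots,D_p}_{d_1,\dots,d_p}\cap \operatorname{Mrk}(D)\subseteq\mathcal{E}(X_{0}^{\lambda})$ I would observe that if $\overline{x}$ lies in $\Sigma^{D_1,\dots,D_p}_{d_1,\dots,d_p}$ then $D_i(\overline{x})=d_i$ for every $i$, so each coefficient $(-1)^{n-i}(-\lambda)(D_i-d_i)$ in \eqref{XO} vanishes and hence $X_{0}^{\lambda}(\overline{x})=0$. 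For the reverse inclusion I would use the identity above: if $X_{0}^{\lambda}(\overline{x})=0$ at some $\overline{x}\in\operatorname{Mrk}(D)$, then $(-\lambda)(D_i(\overline{x})-d_i)=g(\nabla D_i(\overline{x}),X_{0}^{\lambda}(\overline{x}))=0$ for all $i$, and since $\lambda>0$ this forces $D_i(\overline{x})=d_i$, i.e. $\overline{x}\in\Sigma^{D_1,\dots,D_p}_{d_1,\dots,d_p}$. Here the strict positivity of $\lambda$ is what makes the characterization exact.

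For part (b) I would introduce the smooth, nonnegative Lyapunov-type function $V:=\tfrac12\sum_{i=1}^{p}(D_i-d_i)^2\in\mathcal{C}^{\infty}(\operatorname{Mrk}(D),\mathbb{R})$, which vanishes precisely on $\Sigma^{D_1,\dots,D_p}_{d_1,\dots,d_p}\cap\operatorname{Mrk}(D)$. Using the identity for the Lie derivatives of the $D_i$ I would compute
\[
\mathcal{L}_{X_{0}^{\lambda}}V=\sum_{i=1}^{p}(D_i-d_i)\,\mathcal{L}_{X_{0}^{\lambda}}D_i=-\lambda\sum_{i=1}^{p}(D_i-d_i)^2=-2\lambda V .
\]
Consequently, along any integral curve the scalar $v(t):=V(x(t;\overline{x}))$ solves $\dot v=-2\lambda v$, so $v(t)=V(\overline{x})e^{-2\lambda t}\to 0$ as $t\to\infty$; in particular $V$ decreases monotonically along every orbit.

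Now fix $\overline{x}\in\operatorname{Mrk}(D)$ with bounded positive orbit. By the classical facts recalled before the Definition, $\omega(\overline{x})$ is then a nonempty, compact, invariant subset of $\operatorname{Mrk}(D)$ with $x(t;\overline{x})\to\omega(\overline{x})$. Given any $y\in\omega(\overline{x})$, pick $t_n\to\infty$ with $x(t_n;\overline{x})\to y$; continuity of $V$ together with $v(t_n)\to 0$ yields $V(y)=0$, hence $D_i(y)=d_i$ for all $i$, i.e. $y\in\Sigma^{D_1,\dots,D_p}_{d_1,\dots,d_p}\cap\operatorname{Mrk}(D)$. Thus $\omega(\overline{x})\subseteq\Sigma^{D_1,\dots,D_p}_{d_1,\dots,d_p}\cap\operatorname{Mrk}(D)$, and since this set is closed in $\operatorname{Mrk}(D)$ and invariant, the attracting property of the $\omega$-limit set upgrades $x(t;\overline{x})\to\omega(\overline{x})$ to $x(t;\overline{x})\to\Sigma^{D_1,\dots,D_p}_{d_1,\dots,d_p}\cap\operatorname{Mrk}(D)$. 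To close the argument I would note that by part (a) the nonempty set $\Sigma^{D_1,\dots,D_p}_{d_1,\dots,d_p}\cap\operatorname{Mrk}(D)$ consists of equilibria of $X_{0}^{\lambda}$, so $X_{0}^{\lambda}$ admits at least one bounded positive orbit and the set qualifies as an attracting set in the sense of the Definition.

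I expect the only genuinely delicate point to be the passage ensuring that $\omega(\overline{x})$ remains inside the open domain $\operatorname{Mrk}(D)$: limit points sitting on $\partial\operatorname{Mrk}(D)$ would escape the vector field and invalidate the conclusion. This is precisely where the boundedness hypothesis (understood as relative compactness of the positive orbit in $\operatorname{Mrk}(D)$) and the recalled $\omega$-limit set properties do the essential work, the remaining steps being a routine LaSalle-type invariance argument driven by the single Lie-derivative identity.
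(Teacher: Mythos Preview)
Your proposal is correct and follows essentially the same route as the paper: both introduce the quadratic Lyapunov function $\sum_i(D_i-d_i)^2$ (your $V$ is the paper's $F$ up to a harmless factor $\tfrac12$), derive the identity $\mathcal{L}_{X_0^\lambda}F=-2\lambda F$ from \eqref{SVF}, obtain the exponential decay $F(x(t;\overline{x}))=e^{-2\lambda t}F(\overline{x})$, and conclude (b) by showing $\omega(\overline{x})\subseteq F^{-1}(0)$ via continuity along a sequence $t_n\to\infty$. The only noteworthy difference is in the reverse inclusion of (a): the paper argues that an equilibrium has constant integral curve and then invokes the decay formula to force $F(\overline{x})=0$, whereas you argue pointwise and more directly that $X_0^\lambda(\overline{x})=0$ makes each $\mathcal{L}_{X_0^\lambda}D_i(\overline{x})=g(\nabla D_i(\overline{x}),0)=0$, hence $D_i(\overline{x})=d_i$; your version is cleaner but the two are equivalent in spirit.
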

\begin{proof}
Let us define the smooth function $F:\operatorname{Mrk}(D)\rightarrow [0,\infty)$ given by
$$
F(x):=(D_1 (x)-d_1)^2 +\dots + (D_p (x)-d_p)^2, ~ (\forall) x\in\operatorname{Mrk}(D).
$$
Let $\overline{x}\in \operatorname{Mrk}(D)$ be given, and let $t\in I_{\overline{x}}\subseteq\mathbb{R}\mapsto x(t;\overline{x})\in\operatorname{Mrk}(D)$ be the integral curve of the vector field $X_{0}^{\lambda}\in\mathfrak{X}(\operatorname{Mrk}(D))$ such that $x(0;\overline{x})=\overline{x}$, where $I_{\overline{x}}\subseteq\mathbb{R}$ stands for the maximal domain of definition of the solution $x(\cdot;\overline{x})$.

Since the vector field $X_{0}^{\lambda}\in\mathfrak{X}(\operatorname{Mrk}(D))$ satisfies by construction the relations \eqref{SVF}, we have that
\begin{align}\label{lder}
\begin{split}
\mathcal{L}_{X_{0}^{\lambda}}F &= \sum_{i=1}^{p}\mathcal{L}_{X_{0}^{\lambda}}(D_i - d_i)^2 = \sum_{i=1}^{p}2(D_i - d_i)\mathcal{L}_{X_{0}^{\lambda}}(D_i - d_i)\\
&= \sum_{i=1}^{p}2(D_i - d_i)(-\lambda) (D_i - d_i)= (-2\lambda)\sum_{i=1}^{p}(D_i - d_i)^2 \\
&= (-2\lambda) F.
\end{split}
\end{align}
Using the relation \eqref{lder}, we obtain that
\begin{equation*}
\dfrac{\mathrm{d}}{\mathrm{d}t}F(x(t;\overline{x}))= (-2\lambda) F(x(t;\overline{x})), ~ (\forall)t\in I_{\overline{x}},
\end{equation*}
and hence 
\begin{equation}\label{grw}
F(x(t;\overline{x}))= \exp(-2\lambda t) \cdot F(\overline{x}), ~ (\forall)t\in I_{\overline{x}}.
\end{equation}
Moreover, since the set of zeros of $F$ coincides with $\Sigma^{D_1,\dots,D_p}_{d_1,\dots,d_p}\cap \operatorname{Mrk}(D)$, the following sets equality holds true:
\begin{equation}\label{zeroset}
\{x\in\operatorname{Mrk}(D) : (\mathcal{L}_{X_{0}^{\lambda}}F)(x)=0\}=\Sigma^{D_1,\dots,D_p}_{d_1,\dots,d_p}\cap \operatorname{Mrk}(D).
\end{equation}
\begin{itemize}
\item [(a)] In order to prove that $\Sigma^{D_1,\dots,D_p}_{d_1,\dots,d_p}\cap \operatorname{Mrk}(D)=\mathcal{E}(X_{0}^{\lambda})$, note that from the definition of $X_{0}^{\lambda}$ one obtains directly that $\Sigma^{D_1,\dots,D_p}_{d_1,\dots,d_p}\cap \operatorname{Mrk}(D) \subseteq \mathcal{E}(X_{0}^{\lambda})$. For proving the converse inclusion, let $\overline{x}\in \mathcal{E}(X_{0}^{\lambda})$ be an equilibrium point of $X_{0}^{\lambda}$. Hence, the associated integral curve satisfies the relation $x(t;\overline{x})=\overline{x}$, for all $t\in\mathbb{R}$, and so by relation \eqref{grw} we get that $F(\overline{x})=0$. Since $F(\overline{x})=\sum_{i=1}^{p}(D_i(\overline{x})-d_i)^2 =0$, and $\overline{x}\in\operatorname{Mrk}(D)$ by definition, we get that $\overline{x}\in\Sigma^{D_1,\dots,D_p}_{d_1,\dots,d_p}\cap \operatorname{Mrk}(D)$, and so as $\overline{x}$ was arbitrary chosen from $\mathcal{E}(X_{0}^{\lambda})$, we obtained also the converse inclusion, i.e., $\mathcal{E}(X_{0}^{\lambda})\subseteq \Sigma^{D_1,\dots,D_p}_{d_1,\dots,d_p}\cap \operatorname{Mrk}(D)$.
\item [(b)] In order to prove the second item, let $\overline{x}\in\operatorname{Mrk}(D)$ be an arbitrary point of the open set $\operatorname{Mrk}(D)$ such that the set $\{x(t;\overline{x}): t\geq 0\}$ is bounded. We shall show now that the $\omega-$limit set $\omega(\overline{x})$ is a subset of  $\Sigma^{D_1,\dots,D_p}_{d_1,\dots,d_p}\cap \operatorname{Mrk}(D)$. Indeed, let $y\in\omega(\overline{x})$ be arbitrary chosen. Then, there exists an increasing sequence $(t_{n})_{n\in\mathbb{N}}\subset [0,\infty)$, $\lim_{n\rightarrow \infty}t_n =\infty$, such that $\lim_{n\rightarrow \infty}x(t_n;\overline{x})=y$. Since the set $\{x(t;\overline{x}): t\geq 0\}$ is bounded, we get that $[0,\infty)\subset I_{\overline{x}}$, and hence the relation \eqref{grw} implies that
\begin{equation}\label{grwok}
F(x(t;\overline{x}))= \exp(-2\lambda t) \cdot F(\overline{x}), ~ (\forall)t\in [0,\infty).
\end{equation}
Consequently, for $t=t_n \geq 0$, $n\in\mathbb{N}$, the equality \eqref{grwok} becomes
\begin{equation*}
F(x(t_n;\overline{x}))= \exp(-2\lambda t_n) \cdot F(\overline{x}), ~ (\forall)n\in \mathbb{N}.
\end{equation*}
Since $\lim_{n\rightarrow \infty} t_n =\infty $, $\lambda >0$, $\lim_{n\rightarrow \infty}x(t_n;\overline{x})=y$, and $F$ is continuous, we obtain that $F(y)=0$, and hence taking into account that the set of zeros of $F$ is $\Sigma^{D_1,\dots,D_p}_{d_1,\dots,d_p}\cap \operatorname{Mrk}(D)$, it follows that $y\in \Sigma^{D_1,\dots,D_p}_{d_1,\dots,d_p}\cap \operatorname{Mrk}(D)$. As $y\in\omega(\overline{x})$ was arbitrary chosen, we obtain that $\omega(\overline{x})\subseteq \Sigma^{D_1,\dots,D_p}_{d_1,\dots,d_p}\cap \operatorname{Mrk}(D)$. Since $x(t;\overline{x})\rightarrow \omega(\overline{x})\subseteq \Sigma^{D_1,\dots,D_p}_{d_1,\dots,d_p}\cap \operatorname{Mrk}(D)$ for $t\rightarrow \infty$, it follows that $x(t;\overline{x})\rightarrow \Sigma^{D_1,\dots,D_p}_{d_1,\dots,d_p}\cap \operatorname{Mrk}(D)$ for $t\rightarrow \infty$.
\end{itemize}
\end{proof}

Using the properties of $\omega-$limit sets and the Theorem \eqref{mainTHM} we get the following result.
\begin{proposition}\label{propok}
In the hypothesis of Theorem \eqref{mainTHM}, the following assertions hold true:
\item [(a)] For every positively invariant compact set $K\subset \operatorname{Mrk}(D)$, and for every $\overline{x}\in K$ we have that $\omega(\overline{x})\subseteq \Sigma^{D_1,\dots,D_p}_{d_1,\dots,d_p}\cap \operatorname{Mrk}(D)$, and consequently $x(t;\overline{x})\rightarrow \Sigma^{D_1,\dots,D_p}_{d_1,\dots,d_p}\cap \operatorname{Mrk}(D)$ for $t\rightarrow \infty$.
\item [(b)] If $\Sigma^{D_1,\dots,D_p}_{d_1,\dots,d_p}\cap \operatorname{Mrk}(D)$ contains isolated points, then each such a point $\overline{x}\in \Sigma^{D_1,\dots,D_p}_{d_1,\dots,d_p}\cap\operatorname{Mrk}(D)$ is an asymptotically stable equilibrium point of $X_{0}^{\lambda}$.
\end{proposition}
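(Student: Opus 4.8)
The plan is to handle the two items separately, in each case reusing the machinery already assembled, so that essentially no new computation is required.

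For (a), I would observe that the statement is a direct specialization of Theorem \ref{mainTHM}(b) together with Remark \ref{rem1}. Fix a positively invariant compact set $K\subset\operatorname{Mrk}(D)$ and a point $\overline{x}\in K$. Positive invariance gives $\mathcal{O}^{+}_{\overline{x}}\subseteq K$, and compactness of $K$ forces the positive orbit to be bounded; moreover $\omega(\overline{x})\subseteq K\subseteq\operatorname{Mrk}(D)$, so the $\omega$-limit set stays inside the domain of $X_{0}^{\lambda}$. The boundedness hypothesis of Theorem \ref{mainTHM}(b) is thus met, and the argument in its proof yields $\omega(\overline{x})\subseteq \mathcal{S}\cap\operatorname{Mrk}(D)$, where $\mathcal{S}=\Sigma^{D_1,\dots,D_p}_{d_1,\dots,d_p}$; the attracting property $x(t;\overline{x})\to\omega(\overline{x})$ then gives $x(t;\overline{x})\to \mathcal{S}\cap\operatorname{Mrk}(D)$ as $t\to\infty$.

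For (b), the idea is to promote $F(x)=\sum_{i=1}^{p}(D_i(x)-d_i)^2$ to a strict local Lyapunov function at an isolated point $\overline{x}\in \mathcal{S}\cap\operatorname{Mrk}(D)$. First I would use isolatedness to choose $r>0$ with $\overline{B}(\overline{x},r)\subset\operatorname{Mrk}(D)$ and $\overline{B}(\overline{x},r)\cap\mathcal{S}=\{\overline{x}\}$; since the zero set of $F$ is exactly $\mathcal{S}\cap\operatorname{Mrk}(D)$, this yields $F(\overline{x})=0$ and $F>0$ on the punctured ball. Combined with the identity $\mathcal{L}_{X_{0}^{\lambda}}F=-2\lambda F$ from \eqref{lder} and the explicit decay $F(x(t;\overline{y}))=\exp(-2\lambda t)F(\overline{y})$ from \eqref{grw}, this makes $F$ a strict Lyapunov function. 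For Lyapunov stability I would run the classical sublevel-set argument: for $\varepsilon\in(0,r]$ set $m_{\varepsilon}:=\min_{\|x-\overline{x}\|=\varepsilon}F(x)>0$ (the sphere is compact and $F$ is positive on it), and by continuity of $F$ and $F(\overline{x})=0$ pick $\delta\in(0,\varepsilon)$ with $F<m_{\varepsilon}$ on $B(\overline{x},\delta)$. For $\overline{y}\in B(\overline{x},\delta)$ the decay relation gives $F(x(t;\overline{y}))\leq F(\overline{y})<m_{\varepsilon}$ for all $t\geq 0$, so the orbit cannot reach the sphere $\|x-\overline{x}\|=\varepsilon$ and therefore remains in $B(\overline{x},\varepsilon)\subseteq \overline{B}(\overline{x},r)\subset\operatorname{Mrk}(D)$, in particular being defined for all $t\geq0$. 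For attractivity, each such orbit is bounded, so (a) gives $\omega(\overline{y})\subseteq \mathcal{S}\cap\operatorname{Mrk}(D)$; but $\omega(\overline{y})\subseteq\overline{B}(\overline{x},\varepsilon)$ and $\overline{B}(\overline{x},\varepsilon)\cap\mathcal{S}=\{\overline{x}\}$, while $\omega(\overline{y})$ is nonempty, forcing $\omega(\overline{y})=\{\overline{x}\}$ and hence $x(t;\overline{y})\to\overline{x}$. Stability and local attractivity together give asymptotic stability.

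The only genuine content is the observation that the isolatedness hypothesis is precisely what turns $F$ into a strict Lyapunov function, i.e.\ what guarantees $F>0$ on a punctured neighborhood of $\overline{x}$ rather than merely $F\geq 0$; without it there could be nearby equilibria and stability would fail. Everything else is the standard Lyapunov stability/attractivity packaging, made especially transparent here by the explicit exponential law $F(x(t))=\exp(-2\lambda t)F(\overline{x})$, which replaces the usual monotonicity estimate by an exact formula. The one technical point I would watch is confining every trajectory to the domain $\operatorname{Mrk}(D)$ rather than merely to $\mathbb{R}^n$, which is handled automatically by keeping it inside the compact ball $\overline{B}(\overline{x},r)\subset\operatorname{Mrk}(D)$.
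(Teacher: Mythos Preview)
Your proposal is correct and follows essentially the same approach as the paper. For (a) both you and the paper simply invoke Theorem~\ref{mainTHM}(b) together with Remark~\ref{rem1}; for (b) both arguments rest on the observation that isolatedness makes $F(x)=\sum_{i=1}^{p}(D_i(x)-d_i)^2$ a strict Lyapunov function via $\mathcal{L}_{X_{0}^{\lambda}}F=-2\lambda F$, the only difference being that the paper stops at ``$F$ is a strict Lyapunov function'' and appeals to the standard theorem, whereas you unpack the sublevel-set stability argument and the attractivity step explicitly. One small imprecision: in your attractivity step you cite part (a), which is stated for positively invariant compact sets, but what you actually need (and what your bounded-orbit observation delivers) is Theorem~\ref{mainTHM}(b) directly; this does not affect the validity of the argument.
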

\begin{proof}
The item $(a)$ follows directly from Theorem \eqref{mainTHM} and the Remark \eqref{rem1}. In order to prove the statement $(b)$ it is enough to construct a strict Lyapunov function associated to each isolated equilibrium state $x_e \in\Sigma^{D_1,\dots,D_p}_{d_1,\dots,d_p}\cap\operatorname{Mrk}(D)$ (recall from Theorem \eqref{mainTHM} that the set of equilibrium points of $X_{0}^{\lambda}$ equals to $\Sigma^{D_1,\dots,D_p}_{d_1,\dots,d_p}\cap\operatorname{Mrk}(D)$). In order to do that, pick an isolated point $x_e \in \Sigma^{D_1,\dots,D_p}_{d_1,\dots,d_p}\cap\operatorname{Mrk}(D)$. Hence, there exists $U_{x_e} \subseteq \operatorname{Mrk}(D)$, an open neighborhood of $x_e$, such that $U_{x_e}\cap(\Sigma^{D_1,\dots,D_p}_{d_1,\dots,d_p}\cap\operatorname{Mrk}(D))=\{x_e \}$, since $x_e$ is an isolated point of  $\Sigma^{D_1,\dots,D_p}_{d_1,\dots,d_p}\cap\operatorname{Mrk}(D)$.  

Let us define now the smooth function $F: U_{x_e}\rightarrow [0,\infty)$ given by
$$
F(x):=(D_1 (x)-d_1)^2 +\dots + (D_p (x)-d_p)^2, ~ (\forall) x\in U_{x_e}.
$$
Since by hypothesis we have that $U_{x_e}\cap(\Sigma^{D_1,\dots,D_p}_{d_1,\dots,d_p}\cap \operatorname{Mrk}(D))=\{x_e\}$, and the set of zeros of $F$ in $U_{x_e}$ is the set $U_{x_e}\cap(\Sigma^{D_1,\dots,D_p}_{d_1,\dots,d_p}\cap \operatorname{Mrk}(D))$, it follows that $x_e$ is the unique solution of the equation $F(x)=0$ in $U_{x_e}$.
Recall from \eqref{lder} that
\begin{equation}\label{ecok} 
(\mathcal{L}_{X_{0}^{\lambda}}F) (x)= (-2\lambda)F(x), ~(\forall)x\in U_{x_e}.
\end{equation}
Hence, we get that $F(x_e)=0$, $F(x)>0$, $(\mathcal{L}_{X_{0}^{\lambda}}F) (x)<0$, for every $x\in U_{x_e}\setminus\{x_e\}$, and consequently $F$ is a strict Lyapunov function associated to the equilibrium point $x_e$.
\end{proof}

\medskip
Next result is a reformulation of Theorem \eqref{mainTHM} in the case when $\Sigma^{D_1,\dots,D_p}_{d_1,\dots,d_p}$ is a closed subset of $\mathbb{R}^n$ given by $\Sigma^{D_1,\dots,D_p}_{d_1,\dots,d_p}:=D^{-1}(\{(d_1,\dots,d_p)\})\subset U\subseteq \mathbb{R}^n$, where $U\subseteq \mathbb{R}^n$ is an open set, $D:=(D_1,\dots,D_p) : U\rightarrow \mathbb{R}^p$ is a smooth function, and $(d_1,\dots,d_p)\in\mathbb{R}^p$ is a \textbf{regular value} of $D$. In this case recall that $\Sigma^{D_1,\dots,D_p}_{d_1,\dots,d_p} \subset \operatorname{Mrk}(D)$, and hence $\Sigma^{D_1,\dots,D_p}_{d_1,\dots,d_p}\cap \operatorname{Mrk}(D)=\Sigma^{D_1,\dots,D_p}_{d_1,\dots,d_p}$. Moreover, in this case,  $\Sigma^{D_1,\dots,D_p}_{d_1,\dots,d_p}$ is a $(n-p)-$dimensional smooth submanifold of $\mathbb{R}^n$.

\begin{theorem}\label{mainTHMreg}
Let $\mathcal{S}\subset\mathbb{R}^n$ be a nonempty closed subset of $\mathbb{R}^n$. Let $1\leq p\leq n$ be a natural number, $U\subseteq \mathbb{R}^n$ an open subset of $\mathbb{R}^n$, and let $D:=(D_1,\dots,D_p) : U\rightarrow \mathbb{R}^p$ be a smooth function such that $\mathcal{S}=\Sigma^{D_1,\dots,D_p}_{d_1,\dots,d_p}:=D^{-1}(\{(d_1,\dots,d_p)\})\subset U\subseteq \mathbb{R}^n$, for some point $(d_1,\dots,d_p)\in\mathbb{R}^p$ in the image of $D$ which happens to a be a regular value of $D$. 

Then for each real number $\lambda >0$, one can associate a smooth vector field $X_{0}^{\lambda} \in\mathfrak{X}(\operatorname{Mrk}(D))$ given by  
\begin{equation}\label{XOO}
X_{0}^{\lambda}=\left\| \bigwedge_{i=1}^{p} \nabla D_i\right\|_{p}^{-2}\cdot\sum_{i=1}^{p}(-1)^{n-i}(-\lambda)(D_i - d_i)\Theta_i, 
\end{equation}
where
$$
\Theta_i = \star\left[ \bigwedge_{j=1, j\neq i}^{p} \nabla D_j \wedge\star\left(\bigwedge_{j=1}^{p} \nabla D_j\right)\right], ~ i\in\{1,\dots,p\},
$$
such that the following statements hold true.
\begin{itemize}
\item [(a)] The set of the equilibrium states of the vector field $X_{0}^{\lambda}\in\mathfrak{X}(\operatorname{Mrk}(D))$, i.e., $\mathcal{E}(X_{0}^{\lambda}):=\{x\in\operatorname{Mrk}(D) : X_{0}^{\lambda} (x)=0\}$, is given by $\mathcal{E}(X_{0}^{\lambda})=\Sigma^{D_1,\dots,D_p}_{d_1,\dots,d_p}$.
\item [(b)] The vector field $X_{0}^{\lambda}\in\mathfrak{X}(\operatorname{Mrk}(D))$ admits the attracting set $\Sigma^{D_1,\dots,D_p}_{d_1,\dots,d_p}$. More precisely, for every $\overline{x} \in\operatorname{Mrk}(D)$, such that the set $\{x(t;\overline{x}): t\geq 0\}$ is bounded, $x(t;\overline{x})\rightarrow \Sigma^{D_1,\dots,D_p}_{d_1,\dots,d_p}$ for $t\rightarrow \infty$.
\end{itemize}
\end{theorem}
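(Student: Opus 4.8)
The plan is to obtain this statement as the special case of Theorem~\ref{mainTHM} corresponding to a regular prescribed value, so that no new dynamical argument is needed; the whole task reduces to identifying the invariant set $\Sigma^{D_1,\dots,D_p}_{d_1,\dots,d_p}\cap\operatorname{Mrk}(D)$ of Theorem~\ref{mainTHM} with $\Sigma^{D_1,\dots,D_p}_{d_1,\dots,d_p}$ itself, and then transcribing the conclusions (a) and (b).

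First I would record the inclusion $\Sigma^{D_1,\dots,D_p}_{d_1,\dots,d_p}\subseteq\operatorname{Mrk}(D)$. Since $(d_1,\dots,d_p)$ is a regular value of $D$, at every point $x\in\Sigma^{D_1,\dots,D_p}_{d_1,\dots,d_p}=D^{-1}(\{(d_1,\dots,d_p)\})$ the differential $\mathrm{d}D(x)$ is surjective, i.e.\ the gradients $\nabla D_1(x),\dots,\nabla D_p(x)$ are linearly independent, which is exactly the defining condition for membership in $\operatorname{Mrk}(D)$. Therefore $\Sigma^{D_1,\dots,D_p}_{d_1,\dots,d_p}\cap\operatorname{Mrk}(D)=\Sigma^{D_1,\dots,D_p}_{d_1,\dots,d_p}$.

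Next I would verify the hypotheses of Theorem~\ref{mainTHM}. The set $\mathcal{S}=\Sigma^{D_1,\dots,D_p}_{d_1,\dots,d_p}$ is nonempty and closed by assumption, and by the previous step it is contained in the open set $\operatorname{Mrk}(D)$; hence $\Sigma^{D_1,\dots,D_p}_{d_1,\dots,d_p}\cap\operatorname{Mrk}(D)=\Sigma^{D_1,\dots,D_p}_{d_1,\dots,d_p}\neq\emptyset$, which is precisely the nonemptiness requirement of Theorem~\ref{mainTHM}. The associated vector field $X_0^{\lambda}\in\mathfrak{X}(\operatorname{Mrk}(D))$ is given by the identical formula \eqref{XOO}.

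Finally, applying Theorem~\ref{mainTHM} gives $\mathcal{E}(X_0^{\lambda})=\Sigma^{D_1,\dots,D_p}_{d_1,\dots,d_p}\cap\operatorname{Mrk}(D)$ together with the attraction of every bounded positive orbit to that same set; substituting the identity established above turns both conclusions into the asserted statements (a) and (b). I expect no genuine obstacle here: the entire content is the elementary observation that regularity forces the level set to lie inside the maximal-rank open set, after which the general theorem applies verbatim. As an optional closing remark I would note that, being a regular level set, $\Sigma^{D_1,\dots,D_p}_{d_1,\dots,d_p}$ is a smooth embedded $(n-p)$-dimensional submanifold of $\mathbb{R}^n$, so in this regular case the prescribed attracting set is in fact a manifold.
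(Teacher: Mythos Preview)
Your proposal is correct and follows exactly the paper's own approach: the paper presents Theorem~\ref{mainTHMreg} explicitly as a reformulation of Theorem~\ref{mainTHM} for the regular-value case, noting beforehand that $\Sigma^{D_1,\dots,D_p}_{d_1,\dots,d_p}\subset\operatorname{Mrk}(D)$ and hence $\Sigma^{D_1,\dots,D_p}_{d_1,\dots,d_p}\cap\operatorname{Mrk}(D)=\Sigma^{D_1,\dots,D_p}_{d_1,\dots,d_p}$, and then stating the theorem without a separate proof.
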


Using the properties of $\omega-$limit sets and the Theorem \eqref{mainTHMreg} we get the following result.
\begin{proposition}
In the hypothesis of Theorem \eqref{mainTHM}, the following assertions hold true:
\item [(a)] For every positively invariant compact set $K\subset \operatorname{Mrk}(D)$, and for every $\overline{x}\in K$ we have that $\omega(\overline{x})\subseteq \Sigma^{D_1,\dots,D_p}_{d_1,\dots,d_p}$, and consequently $x(t;\overline{x})\rightarrow \Sigma^{D_1,\dots,D_p}_{d_1,\dots,d_p}$ for $t\rightarrow \infty$.
\item [(b)] If $\Sigma^{D_1,\dots,D_p}_{d_1,\dots,d_p}$ contains isolated points, then each such a point $\overline{x}\in \Sigma^{D_1,\dots,D_p}_{d_1,\dots,d_p}$ is an asymptotically stable equilibrium point of $X_{0}^{\lambda}$.
\end{proposition}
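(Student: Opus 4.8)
The plan is to recognize that this proposition is the exact analogue of Proposition \eqref{propok}, now stated in the regular-value setting of Theorem \eqref{mainTHMreg}, so that both of its arguments transport almost verbatim. Writing $\Sigma:=\Sigma^{D_1,\dots,D_p}_{d_1,\dots,d_p}$ for brevity, the single structural simplification is that, since $(d_1,\dots,d_p)$ is a regular value of $D$, one has $\Sigma\subset\operatorname{Mrk}(D)$, whence $\Sigma\cap\operatorname{Mrk}(D)=\Sigma$. Every occurrence of the intersection $\Sigma\cap\operatorname{Mrk}(D)$ in the earlier statements therefore collapses to $\Sigma$ itself, which is precisely the cosmetic difference between the two propositions. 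I would invoke this reduction at the outset so that the conclusions of Theorem \eqref{mainTHMreg} and Proposition \eqref{propok} apply word for word.

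For item (a), I would first note that a point $\overline{x}$ lying in a positively invariant compact set $K\subset\operatorname{Mrk}(D)$ has positive orbit contained in $K$, hence bounded, placing us in the hypotheses of Theorem \eqref{mainTHMreg}(b). Then I would reuse the core computation from the proof of Theorem \eqref{mainTHM}: the function $F(x)=\sum_{i=1}^{p}(D_i(x)-d_i)^2$ decays along orbits as $F(x(t;\overline{x}))=\exp(-2\lambda t)F(\overline{x})$, so for any $y\in\omega(\overline{x})$, obtained as a limit $x(t_n;\overline{x})\to y$ with $t_n\to\infty$, continuity of $F$ together with $\lambda>0$ forces $F(y)=0$, i.e.\ $y\in\Sigma$. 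This yields $\omega(\overline{x})\subseteq\Sigma$, and the attracting property of the $\omega$-limit set then gives $x(t;\overline{x})\to\Sigma$ as $t\to\infty$. Equivalently, this is just Remark \eqref{rem1} combined with the identification $\Sigma\cap\operatorname{Mrk}(D)=\Sigma$.

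For item (b), I would reproduce the strict-Lyapunov-function construction of Proposition \eqref{propok}(b). Given an isolated point $x_e\in\Sigma$, choose an open neighborhood $U_{x_e}\subseteq\operatorname{Mrk}(D)$ with $U_{x_e}\cap\Sigma=\{x_e\}$, and define $F$ as above on $U_{x_e}$. Then $F(x_e)=0$ and $F(x)>0$ for $x\in U_{x_e}\setminus\{x_e\}$, since $x_e$ is the unique zero of $F$ there, while the Lie-derivative identity $\mathcal{L}_{X_{0}^{\lambda}}F=(-2\lambda)F$ gives $\mathcal{L}_{X_{0}^{\lambda}}F(x)<0$ for all such $x$. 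Thus $F$ is a strict Lyapunov function at $x_e$, and asymptotic stability follows from the classical Lyapunov theorem.

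I do not expect a genuine obstacle here, as the result is a direct reformulation rather than a new argument. The only point requiring a moment of care is the regular-value reduction $\Sigma\cap\operatorname{Mrk}(D)=\Sigma$, which must be established first; once it is in place, items (a) and (b) are immediate consequences of Theorem \eqref{mainTHMreg}, Remark \eqref{rem1}, and the Lyapunov computation already carried out in the proofs of Theorem \eqref{mainTHM} and Proposition \eqref{propok}.
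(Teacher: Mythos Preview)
Your proposal is correct and matches the paper's approach exactly: the paper states this proposition without an explicit proof, placing it immediately after Theorem~\eqref{mainTHMreg} as the regular-value analogue of Proposition~\eqref{propok}, and your argument is precisely the transport of that earlier proof via the identification $\Sigma^{D_1,\dots,D_p}_{d_1,\dots,d_p}\cap\operatorname{Mrk}(D)=\Sigma^{D_1,\dots,D_p}_{d_1,\dots,d_p}$. You also correctly flagged the likely typo in the hypothesis reference (the surrounding text invokes Theorem~\eqref{mainTHMreg}, not Theorem~\eqref{mainTHM}).
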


\section{Application to conservative dynamics}

The aim of this section is to apply the main results of the above section in order to provide an answer to the following problem: given a conservative $n-$dimensional dynamical system (i.e., a dynamical system which admits a $(k+p)-$dimensional vector type first integral, where $k+p < n$; for a brief introduction see, e.g., \cite{ratiurazvan}, \cite{TudoranJGM}) and an invariant set $\mathcal{S}$ (given as the level set of a $k-$dimensional first integral defined by some $k-$dimensional projection of the original $(k+p)-$dimensional first integral), construct a curve of dynamical systems starting from the original system, such that each system on this curve is still conservative (admitting the $p-$dimensional first integral which together with the $k-$dimensional first integral, forms the original $(k+p)-$dimensional first integral), keeps invariant the set $\mathcal{S}\cap \operatorname{Mrk}$ (where $\operatorname{Mrk}$ is the open set consisting of the points where the rank of the $(k+p)-$dimensional first integral is maximal), and moreover, the intersection of $\mathcal{S}\cap \operatorname{Mrk}$ with each level set (corresponding to regular values) of the $p-$dimensional first integral, is an attracting set of each system on the curve (excepting the original system).

More precisely, let $\mathfrak{S}$ be a given dynamical system (defined on an open subset $U\subseteq\mathbb{R}^n$) which admits $k+p$ smooth first integrals, $I_1,\dots, I_k, D_1,\dots, D_p$ (or equivalently, it admits two vector type first integrals, $I:=(I_1,\dots, I_k)$ and $D:=(D_1,\dots, D_p)$). Let $\Sigma^{D_1,\dots, D_p}_{d_1,\dots, d_p}$ be a dynamically invariant set of $\mathfrak{S}$, given by the level set of the vector type first integral $D$ corresponding to some regular (or singular) value $d:=(d_1,\dots,d_p)\in\operatorname{Im}(D)$. Starting with these data, we construct a smooth family of dynamical systems $\{\mathfrak{S}_{\lambda}\}_{\lambda \geq 0}$ (defined on the open subset $\operatorname{Mrk}((D,I))\subseteq U$ consisting of the points of maximum rank of the smooth function $(D,I)$), such that $\mathfrak{S}_{0} =\mathfrak{S}|_{\operatorname{Mrk}((D,I))}$, and for all $\lambda >0$, the associated dynamical system, $\mathfrak{S}_{\lambda}$, admits also the vector type first integral $I|_{\operatorname{Mrk}((D,I))}$, keeps dynamically invariant the set $\Sigma^{D_1,\dots, D_p}_{d_1,\dots, d_p}\cap\operatorname{Mrk}((D,I))$ and moreover, the invariant set $\Sigma^{D_1,\dots, D_p}_{d_1,\dots, d_p}\cap (I|_{\operatorname{Mrk}((D,I))})^{-1}(\{\mu\})$ (if not void) is an attracting set of $\mathfrak{S}_{\lambda}|_{(I|_{\operatorname{Mrk}((D,I))})^{-1}(\{\mu\})}$, for every regular value  $\mu\in\operatorname{Im}(I|_{\operatorname{Mrk}((D,I))})$. In particular, if $\mu$ is a regular value of $I|_{\operatorname{Mrk}((D,I))}$ such that the intersection of some connected components of $(I|_{\operatorname{Mrk}((D,I))})^{-1}(\{\mu\})$ with the invariant set $\Sigma^{D_1,\dots, D_p}_{d_1,\dots, d_p}\cap\operatorname{Mrk}((D,I))$, contain a single orbit of the dynamical system $\mathfrak{S}$, e.g., equilibrium point, periodic orbit, homoclinic or heteroclinic cycle (if any such $\mu$ exists), then these orbits preserve their nature as orbits of the dynamical system $\mathfrak{S}_{\lambda}$ (for each $\lambda >0$), and moreover they attract every bounded positive orbit of the dynamical system $\mathfrak{S}_{\lambda}|_{(I|_{\operatorname{Mrk}((D,I))})^{-1}(\{\mu\})}$ sharing the same connected component. In the case of equilibrium points, these become asymptotically stable, as equilibrium states of the dynamical system $\mathfrak{S}_{\lambda}|_{(I|_{\operatorname{Mrk}((D,I))})^{-1}(\{\mu\})}$.

Let $\mathfrak{S}$ be a dynamical system, generated by a smooth vector field $X\in\mathfrak{X}(U)$ defined on an open subset $U\subseteq\mathbb{R}^n$, which admits $k+p$ smooth first integrals ($0< k+p < n$, $k\geq 0$), $I_1,\dots, I_k, D_1,\dots, D_p \in\mathcal{C}^{\infty}(U,\mathbb{R})$. In order to simplify the notations, we shall denote by $I$, $D$ and $(D,I)$, the vector type first integrals, $(I_1,\dots, I_k)$, $(D_1,\dots, D_p)$, and respectively $(D_1,\dots, D_p,I_1,\dots, I_k)$. Let $\mathcal{S}$ be a closed invariant set of $\mathfrak{S}$, given by the level set of the vector type first integral $D$ corresponding to some regular (or singular) value $d:=(d_1,\dots,d_p)\in\operatorname{Im}(D)$, i.e., $\mathcal{S}= \Sigma^{D_1,\dots, D_p}_{d_1,\dots, d_p}:=D^{-1}(\{(d_1,\dots,d_p)\})$.

In this settings, following mimetically the approach given in the previous section, we shall construct a family of smooth vector fields,  $X_{0}^{\lambda}$, $\lambda \geq 0$ (defined on the open set $\operatorname{Mrk}((D,I))\subseteq U$ consisting of those points of $U$ such that the rank of the differential of $(D,I):U\rightarrow \mathbb{R}^{p+k}$ evaluated at them is maximal), with properties similar to those of the vector field analyzed in Theorem \eqref{mainTHM}. More precisely, we shall prove that $X_{0}^{0}\equiv 0$, and for each $\lambda >0$, the vector field $X_{0}^{\lambda}\in\mathfrak{X}(\operatorname{Mrk}((D,I)))$ admits the vector type first integral $I|_{\operatorname{Mrk}((D,I))}$, keeps dynamically invariant the set $\Sigma^{D_1,\dots,D_p}_{d_1,\dots,d_p}\cap \operatorname{Mrk}((D,I))$, and moreover, each of the invariant sets $\Sigma^{D_1,\dots,D_p}_{d_1,\dots,d_p}\cap (I|_{\operatorname{Mrk}((D,I))})^{-1}(\{\mu\})$ (corresponding to regular values $\mu$ of $I|_{\operatorname{Mrk}((D,I))}$), is an attracting set for the dynamical system $\mathfrak{S}_{\lambda}|_{(I|_{\operatorname{Mrk}((D,I))})^{-1}(\{\mu\})}$.

In order to do that, let us fix a strictly positive real number $\lambda >0$. Then using the Theorem \eqref{MTD}, a particular solution of the system of equations 
\begin{equation}\label{SVFc}
\mathcal{L}_{X}I_1 =\dots = \mathcal{L}_{X}I_k =0, \mathcal{L}_{X}D_1 = (-\lambda)(D_1 - d_1), \dots, \mathcal{L}_{X}D_p = (-\lambda)(D_p - d_p),
\end{equation}
is given by the vector field $X= X_{0}^{\lambda} \in\mathfrak{X}(\operatorname{Mrk}((D,I)))$ defined by
\begin{equation}\label{xzero}
X_{0}^{\lambda}=\left\| \bigwedge_{i=1}^{p} \nabla D_i\wedge\bigwedge_{j=1}^{k} \nabla I_j \right\|_{k+p}^{-2}\cdot\sum_{i=1}^{p}(-1)^{n-i}(-\lambda) (D- d_i) \Theta_i, 
\end{equation}
where
$$
\Theta_i = \star\left[ \bigwedge_{j=1, j\neq i}^{p} \nabla D_j \wedge \bigwedge_{l=1}^{k} \nabla I_l  \wedge\star\left(\bigwedge_{j=1}^{p} \nabla D_j\wedge\bigwedge_{l=1}^{k} \nabla I_l \right)\right].
$$
Note that by construction, $X_{0}^{\lambda}\in\mathfrak{X}(\operatorname{Mrk}((D,I)))$ admits the vector type first integral $I|_{\operatorname{Mrk}((D,I))}:=({I_1}|_{\operatorname{Mrk}((D,I))},\dots,{I_k}|_{\operatorname{Mrk}((D,I))})$, keeps $\Sigma^{D_1,\dots,D_p}_{d_1,\dots,d_p}\cap \operatorname{Mrk}((D,I))$ dynamically invariant, as well as the set $\Sigma^{D_1,\dots,D_p}_{d_1,\dots,d_p}\cap (I|_{\operatorname{Mrk}((D,I))})^{-1}(\{\mu\})$, for each regular value $\mu$ of the vector type first integral $I|_{\operatorname{Mrk}((D,I))}$. Let us state now a theorem which points out some of the main properties of the vector field $X_{0}^{\lambda}\in\mathfrak{X}(\operatorname{Mrk}((D,I)))$.

\begin{theorem}\label{mainTHMconserv}
Let $X_{0}^{\lambda}\in\mathfrak{X}(\operatorname{Mrk}((D,I)))$ be the vector field defined by the relation \eqref{xzero}. Assuming that $\Sigma^{D_1,\dots,D_p}_{d_1,\dots,d_p}\cap \operatorname{Mrk}((D,I))\neq \emptyset$, the following statements hold true.
\begin{itemize}
\item [(a)] The set of the equilibrium states of the vector field $X_{0}^{\lambda}$, i.e., $\mathcal{E}(X_{0}^{\lambda}):=\{x\in\operatorname{Mrk}((D,I)) : X_{0}^{\lambda} (x)=0\}$, is given by $\mathcal{E}(X_{0}^{\lambda})=\Sigma^{D_1,\dots,D_p}_{d_1,\dots,d_p}\cap \operatorname{Mrk}((D,I))$.
\item [(b)] For each regular value $\mu$ of the vector type first integral $I|_{\operatorname{Mrk}((D,I))}$, the vector field ${X_{0}^{\lambda}}|_{(I|_{\operatorname{Mrk}((D,I))})^{-1}(\{\mu\})}$ admits the attracting set $\Sigma^{D_1,\dots,D_p}_{d_1,\dots,d_p}\cap (I|_{\operatorname{Mrk}((D,I))})^{-1}(\{\mu\})$. More precisely, for every $\overline{x} \in (I|_{\operatorname{Mrk}((D,I))})^{-1}(\{\mu\})$, such that the set $\{x(t;\overline{x}): t\geq 0\}$ is bounded, $x(t;\overline{x})\rightarrow \Sigma^{D_1,\dots,D_p}_{d_1,\dots,d_p}\cap (I|_{\operatorname{Mrk}((D,I))})^{-1}(\{\mu\})$ for $t\rightarrow \infty$.
\item [(c)] Let $\mu$ be an arbitrary fixed regular value of the vector type first integral $I|_{\operatorname{Mrk}((D,I))}$. Then for every positively invariant compact set $K\subset (I|_{\operatorname{Mrk}((D,I))})^{-1}(\{\mu\})$, and for every $\overline{x}\in K$ we have that $\omega(\overline{x})\subseteq \Sigma^{D_1,\dots,D_p}_{d_1,\dots,d_p}\cap (I|_{\operatorname{Mrk}((D,I))})^{-1}(\{\mu\})$, and consequently $x(t;\overline{x})\rightarrow \Sigma^{D_1,\dots,D_p}_{d_1,\dots,d_p}\cap (I|_{\operatorname{Mrk}((D,I))})^{-1}(\{\mu\})$ for $t\rightarrow \infty$. Particularly, if $I|_{\operatorname{Mrk}((D,I))}$ is a proper map, then the set $(I|_{\operatorname{Mrk}((D,I))})^{-1}(\{\mu\})$ is compact in $\operatorname{Mrk}((D,I))$, dynamically invariant, and consequently for every $\overline{x}\in (I|_{\operatorname{Mrk}((D,I))})^{-1}(\{\mu\})$, we have that $x(t;\overline{x})\rightarrow \Sigma^{D_1,\dots,D_p}_{d_1,\dots,d_p}\cap (I|_{\operatorname{Mrk}((D,I))})^{-1}(\{\mu\})$ for $t\rightarrow \infty$.
\item [(d)] Suppose $\mu$ is a regular value of $I|_{\operatorname{Mrk}((D,I))}$ such that  $\Sigma^{D_1,\dots,D_p}_{d_1,\dots,d_p}\cap (I|_{\operatorname{Mrk}((D,I))})^{-1}(\{\mu\})$ contains isolated points. Then each such a point $\overline{x}\in \Sigma^{D_1,\dots,D_p}_{d_1,\dots,d_p}\cap (I|_{\operatorname{Mrk}((D,I))})^{-1}(\{\mu\})$ is an asymptotically stable equilibrium point of $X_{0}^{\lambda}|_{(I|_{\operatorname{Mrk}((D,I))})^{-1}(\{\mu\})}$.
\end{itemize}
\end{theorem}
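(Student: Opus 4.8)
The plan is to reproduce, in the present setting, the Lyapunov-function argument from the proof of Theorem \eqref{mainTHM}, the one new feature being that the extra conserved quantities $I_1,\dots,I_k$ force each leaf $(I|_{\operatorname{Mrk}((D,I))})^{-1}(\{\mu\})$ to be dynamically invariant. First I would introduce $F:\operatorname{Mrk}((D,I))\to[0,\infty)$, $F:=\sum_{i=1}^{p}(D_i-d_i)^2$. Because $X_{0}^{\lambda}$ satisfies \eqref{SVFc} by construction, the computation \eqref{lder} applies verbatim and yields $\mathcal{L}_{X_{0}^{\lambda}}F=-2\lambda F$, whence $F(x(t;\overline{x}))=\exp(-2\lambda t)F(\overline{x})$ along every integral curve; moreover the zero set of $F$ in $\operatorname{Mrk}((D,I))$ is exactly $\Sigma^{D_1,\dots,D_p}_{d_1,\dots,d_p}\cap\operatorname{Mrk}((D,I))$. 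Since $X_{0}^{\lambda}$ conserves $I$, every level set $(I|_{\operatorname{Mrk}((D,I))})^{-1}(\{\mu\})$ is invariant, so the restricted systems in (b)--(d) are well defined.

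For part (a) I would argue as in Theorem \eqref{mainTHM}(a): the inclusion $\Sigma^{D_1,\dots,D_p}_{d_1,\dots,d_p}\cap\operatorname{Mrk}((D,I))\subseteq\mathcal{E}(X_{0}^{\lambda})$ is read directly from \eqref{xzero}, each summand carrying a factor $D_i-d_i$; conversely, an equilibrium $\overline{x}$ has constant orbit, so $F(\overline{x})=\exp(-2\lambda t)F(\overline{x})$ forces $F(\overline{x})=0$ and hence $\overline{x}\in\Sigma^{D_1,\dots,D_p}_{d_1,\dots,d_p}\cap\operatorname{Mrk}((D,I))$. For part (b), fix a regular value $\mu$ and a point $\overline{x}\in(I|_{\operatorname{Mrk}((D,I))})^{-1}(\{\mu\})$ with bounded positive orbit. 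The set $\Sigma^{D_1,\dots,D_p}_{d_1,\dots,d_p}\cap(I|_{\operatorname{Mrk}((D,I))})^{-1}(\{\mu\})$ is closed and invariant, being an intersection of such sets. For any $y\in\omega(\overline{x})$, picking $t_n\to\infty$ with $x(t_n;\overline{x})\to y$, the exponential formula gives $F(y)=\lim_n\exp(-2\lambda t_n)F(\overline{x})=0$, so $y\in\Sigma^{D_1,\dots,D_p}_{d_1,\dots,d_p}$; and continuity together with $I(x(t;\overline{x}))\equiv\mu$ gives $I(y)=\mu$, so $y$ lies in the leaf. Hence $\omega(\overline{x})\subseteq\Sigma^{D_1,\dots,D_p}_{d_1,\dots,d_p}\cap(I|_{\operatorname{Mrk}((D,I))})^{-1}(\{\mu\})$, and the attracting property of $\omega(\overline{x})$ gives the stated convergence.

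Parts (c) and (d) then follow quickly. For (c), a positively invariant compact $K\subset(I|_{\operatorname{Mrk}((D,I))})^{-1}(\{\mu\})$ makes every orbit started in $K$ bounded, so (b) (equivalently Remark \eqref{rem1}) applies; and if $I|_{\operatorname{Mrk}((D,I))}$ is proper then $(I|_{\operatorname{Mrk}((D,I))})^{-1}(\{\mu\})$ is compact and invariant, hence serves as such a $K$. For (d), since $\mu$ is regular the leaf is a submanifold; around an isolated point $x_e$ of $\Sigma^{D_1,\dots,D_p}_{d_1,\dots,d_p}\cap(I|_{\operatorname{Mrk}((D,I))})^{-1}(\{\mu\})$ choose a relatively open neighborhood on which $x_e$ is the sole zero of $F$. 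There $F>0$ off $x_e$ while $\mathcal{L}_{X_{0}^{\lambda}}F=-2\lambda F<0$, so $F$ restricted to the leaf is a strict Lyapunov function for $X_{0}^{\lambda}|_{(I|_{\operatorname{Mrk}((D,I))})^{-1}(\{\mu\})}$, giving asymptotic stability exactly as in Proposition \eqref{propok}(b).

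I expect the only genuine subtlety to be the confinement step in (b): one must invoke the conservation of $I$, and not merely the decay of $F$, to ensure the $\omega$-limit set lands in the prescribed leaf, so that the attracting set is the leafwise intersection $\Sigma^{D_1,\dots,D_p}_{d_1,\dots,d_p}\cap(I|_{\operatorname{Mrk}((D,I))})^{-1}(\{\mu\})$ and not all of $\Sigma^{D_1,\dots,D_p}_{d_1,\dots,d_p}\cap\operatorname{Mrk}((D,I))$.
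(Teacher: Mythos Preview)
Your proposal is correct and follows essentially the same approach as the paper: the paper's proof is simply the one-line remark that it follows mimetically the proofs of Theorem \eqref{mainTHM} and Proposition \eqref{propok}, which is precisely the Lyapunov-function argument with $F=\sum_{i=1}^{p}(D_i-d_i)^2$ that you have spelled out, together with the leafwise confinement coming from the conservation of $I$. Your treatment of parts (a)--(d) matches the corresponding steps in those results, including the strict Lyapunov argument for (d).
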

\begin{proof}
The proof follows mimetically those of Theorem \eqref{mainTHM} and Proposition \eqref{propok}.
\end{proof}

\medskip
At this point we have all necessary ingredients to construct a smooth family of dynamical systems $\{\mathfrak{S}_{\lambda}\}_{\lambda \geq 0}$ (defined on the open subset $\operatorname{Mrk}((D,I))\subseteq U$), such that $\mathfrak{S}_{0} =\mathfrak{S}|_{\operatorname{Mrk}((D,I))}$, and for all $\lambda >0$, the associated dynamical system, $\mathfrak{S}_{\lambda}$, admits the vector type first integral $I|_{\operatorname{Mrk}((D,I))}$, keeps dynamically invariant the set $\Sigma^{D_1,\dots,D_p}_{d_1,\dots,d_p}\cap\operatorname{Mrk}((D,I))$ and moreover, the invariant set $\Sigma^{D_1,\dots,D_p}_{d_1,\dots,d_p}\cap (I|_{\operatorname{Mrk}((D,I))})^{-1}(\{\mu\})$ (if not void) is an attracting set of $\mathfrak{S}_{\lambda}|_{(I|_{\operatorname{Mrk}((D,I))})^{-1}(\{\mu\})}$, for every regular value  $\mu\in\operatorname{Im}(I|_{\operatorname{Mrk}((D,I))})$. 

Before stating the main result of this section, recall that the dynamical system $\mathfrak{S}$ it was supposed to be generated by a smooth vector field $X\in\mathfrak{X}(U)$ which admits two vector type smooth first integrals, $D=(D_1,\dots,D_p):U \rightarrow \mathbb{R}^p$ and $I=(I_1,\dots,I_k):U \rightarrow \mathbb{R}^k$. In those hypothesis, we fixed a closed and invariant set, $\mathcal{S}\subset U$, given by the level set of $D$ corresponding to some regular (or singular) value $d:=(d_1,\dots,d_p)\in\operatorname{Im}(D)$, i.e., $\mathcal{S}= \Sigma^{D_1,\dots, D_p}_{d_1,\dots, d_p}:=D^{-1}(\{(d_1,\dots,d_p)\})$.

\begin{theorem}\label{THMcons1}
Let $X\in\mathfrak{X}(U)$ be a smooth vector field defined on an open subset $U\subseteq \mathbb{R}^n$, which admits $k+p$ smooth first integrals ($0< k+p < n$, $k\geq 0$), $I_1,\dots, I_k, D_1,\dots, D_p \in\mathcal{C}^{\infty}(U,\mathbb{R})$. Let $\mathcal{S}\subset U$ be a given closed and invariant set, defined as the level set of the vector type first integral $D:=(D_1,\dots, D_p)\in\mathcal{C}^{\infty}(U,\mathbb{R}^p)$ corresponding to some regular (or singular) value $d:=(d_1,\dots,d_p)\in\operatorname{Im}(D)$, i.e., $\mathcal{S}= \Sigma^{D_1,\dots, D_p}_{d_1,\dots, d_p}:=D^{-1}(\{(d_1,\dots,d_p)\})$. 

Then, to each $\lambda \geq 0$, we associate a smooth vector field, $X_{\lambda}:=X+ X_{0}^{\lambda}$, defined on the open set $\operatorname{Mrk}((D,I))\subseteq U$, where the smooth vector field $X_{0}^{\lambda} \in \mathfrak{X}(\operatorname{Mrk}((D,I)))$ is given by
\begin{equation*}
X_{0}^{\lambda}=\left\| \bigwedge_{i=1}^{p} \nabla D_i\wedge\bigwedge_{j=1}^{k} \nabla I_j \right\|_{k+p}^{-2}\cdot\sum_{i=1}^{p}(-1)^{n-i}(-\lambda) (D- d_i) \Theta_i, 
\end{equation*}
where
$$
\Theta_i = \star\left[ \bigwedge_{j=1, j\neq i}^{p} \nabla D_j \wedge \bigwedge_{l=1}^{k} \nabla I_l  \wedge\star\left(\bigwedge_{j=1}^{p} \nabla D_j\wedge\bigwedge_{l=1}^{k} \nabla I_l \right)\right].
$$
In the above settings, the following assertions hold true.
\begin{itemize}
\item[(a)] $\mathcal{E}(X_{\lambda})=\mathcal{E}(X)\bigcap\mathcal{E}(X^{\lambda}_{0})=\mathcal{E}(X)\bigcap\Sigma^{D_1,\dots, D_p}_{d_1,\dots, d_p}\bigcap\operatorname{Mrk}((D,I))$, where $\mathcal{E}(Z)$ stands for the set of equilibrium points of the vector field $Z$.
\item[(b)] The set $\Sigma^{D_1,\dots, D_p}_{d_1,\dots, d_p}\bigcap\operatorname{Mrk}((D,I))$ is a dynamically invariant set of the vector field $X_{\lambda}$, for every $\lambda \geq 0$. Moreover, for each regular value $\mu$ of the vector type first integral $$I|_{\operatorname{Mrk}((D,I))}:=({I_1}|_{\operatorname{Mrk}((D,I))},\dots,{I_k}|_{\operatorname{Mrk}((D,I))}),$$ 
the set $\Sigma^{D_1,\dots,D_p}_{d_1,\dots,d_p}\cap (I|_{\operatorname{Mrk}((D,I))})^{-1}(\{\mu\})$ is a closed and dynamically invariant set of the vector field ${X_{\lambda}}$, for every $\lambda \geq 0$.
\item [(c)] For each regular value $\mu$ of the vector type first integral $I|_{\operatorname{Mrk}((D,I))}$, the vector field ${X_{\lambda}}|_{(I|_{\operatorname{Mrk}((D,I))})^{-1}(\{\mu\})}$, $\lambda >0$, admits the attracting set $$\Sigma^{D_1,\dots,D_p}_{d_1,\dots,d_p}\cap (I|_{\operatorname{Mrk}((D,I))})^{-1}(\{\mu\}).$$ More precisely, for every $\overline{x} \in (I|_{\operatorname{Mrk}((D,I))})^{-1}(\{\mu\})$, such that the set $\{x(t;\overline{x}): t\geq 0\}$ is bounded, $x(t;\overline{x})\rightarrow \Sigma^{D_1,\dots,D_p}_{d_1,\dots,d_p}\cap (I|_{\operatorname{Mrk}((D,I))})^{-1}(\{\mu\})$ for $t\rightarrow \infty$. 
\item [(d)] Suppose there exists $\mu$, a regular value of $I|_{\operatorname{Mrk}((D,I))}$, such that the intersection of some connected component of $(I|_{\operatorname{Mrk}((D,I))})^{-1}(\{\mu\})$ with the invariant set $\Sigma^{D_1,\dots, D_p}_{d_1,\dots, d_p}\cap\operatorname{Mrk}((D,I))$, contain a single orbit $\gamma$ of $X$. Then $\gamma$ preserves its nature as an orbit of ${X_{\lambda}}|_{(I|_{\operatorname{Mrk}((D,I))})^{-1}(\{\mu\})}$ (for every $\lambda >0$), and moreover, attracts every bounded positive orbit of ${X_{\lambda}}|_{(I|_{\operatorname{Mrk}((D,I))})^{-1}(\{\mu\})}$, sharing the same connected component.
\item [(e)] Let $\mu$ be an arbitrary fixed regular value of the vector type first integral $I|_{\operatorname{Mrk}((D,I))}$. Then for every positively invariant compact set $K\subset (I|_{\operatorname{Mrk}((D,I))})^{-1}(\{\mu\})$, and for every $\overline{x}\in K$ we have that $\omega(\overline{x})\subseteq \Sigma^{D_1,\dots,D_p}_{d_1,\dots,d_p}\cap (I|_{\operatorname{Mrk}((D,I))})^{-1}(\{\mu\})$, and consequently $x(t;\overline{x})\rightarrow \Sigma^{D_1,\dots,D_p}_{d_1,\dots,d_p}\cap (I|_{\operatorname{Mrk}((D,I))})^{-1}(\{\mu\})$ for $t\rightarrow \infty$. Particularly, if $I|_{\operatorname{Mrk}((D,I))}$ is a proper map, then the set $(I|_{\operatorname{Mrk}((D,I))})^{-1}(\{\mu\})$ is compact in $\operatorname{Mrk}((D,I))$, dynamically invariant, and consequently for every $\overline{x}\in (I|_{\operatorname{Mrk}((D,I))})^{-1}(\{\mu\})$, we have that $x(t;\overline{x})\rightarrow \Sigma^{D_1,\dots,D_p}_{d_1,\dots,d_p}\cap (I|_{\operatorname{Mrk}((D,I))})^{-1}(\{\mu\})$ for $t\rightarrow \infty$.
\item [(f)] Suppose there exists $\mu$, a regular value of $I|_{\operatorname{Mrk}((D,I))}$, such that the intersection of some connected component of $(I|_{\operatorname{Mrk}((D,I))})^{-1}(\{\mu\})$ with the invariant set $\Sigma^{D_1,\dots, D_p}_{d_1,\dots, d_p}\cap\operatorname{Mrk}((D,I))$,  contains isolated points. Then each such a point is an asymptotically stable equilibrium point of $X_{\lambda}|_{(I|_{\operatorname{Mrk}((D,I))})^{-1}(\{\mu\})}$, for every $\lambda > 0$.
\end{itemize}
\end{theorem}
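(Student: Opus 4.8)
The plan is to observe that $X_\lambda = X + X_0^\lambda$ satisfies exactly the same defining system \eqref{SVFc} as $X_0^\lambda$ alone, and then to transfer the conclusions of Theorem \ref{mainTHMconserv} and Proposition \ref{propok}. Throughout, abbreviate $\Sigma := \Sigma^{D_1,\dots,D_p}_{d_1,\dots,d_p}$ and $W := \operatorname{Mrk}((D,I))$. Since $X$ conserves $I_1,\dots,I_k,D_1,\dots,D_p$ while $X_0^\lambda$ satisfies \eqref{SVFc} by construction, linearity of the Lie derivative gives $\mathcal{L}_{X_\lambda}I_j = 0$ for each $j$ and $\mathcal{L}_{X_\lambda}D_i = (-\lambda)(D_i - d_i)$ for each $i$. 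Setting $F := \sum_{i=1}^{p}(D_i - d_i)^2$ on $W$, the computation in \eqref{lder} then yields verbatim the identity $\mathcal{L}_{X_\lambda}F = -2\lambda F$, hence $F(x(t;\overline{x})) = e^{-2\lambda t}F(\overline{x})$ along every integral curve of $X_\lambda$, where the zero set of $F$ equals $\Sigma\cap W$. A second fact I would record at the outset is that $X_\lambda$ and $X$ coincide on $\Sigma\cap W$: by Theorem \ref{mainTHMconserv}(a) the perturbation $X_0^\lambda$ vanishes precisely there, so $X_\lambda = X + X_0^\lambda = X$ on $\Sigma\cap W$.

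With these two inputs I would dispatch the items in turn. For (a), the inclusion $\mathcal{E}(X)\cap\mathcal{E}(X_0^\lambda)\subseteq\mathcal{E}(X_\lambda)$ is immediate; conversely, if $X_\lambda(\overline{x})=0$ then $(\mathcal{L}_{X_\lambda}F)(\overline{x}) = \mathrm{d}F_{\overline{x}}(X_\lambda(\overline{x})) = 0$, so the identity $\mathcal{L}_{X_\lambda}F = -2\lambda F$ forces $F(\overline{x})=0$, i.e.\ $\overline{x}\in\Sigma\cap W = \mathcal{E}(X_0^\lambda)$ by Theorem \ref{mainTHMconserv}(a), and then $X(\overline{x}) = X_\lambda(\overline{x})-X_0^\lambda(\overline{x})=0$ as well. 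Item (b) follows because the exponential decay of $F$ keeps $\{F=0\}=\Sigma\cap W$ invariant while $\mathcal{L}_{X_\lambda}I_j=0$ keeps each $(I|_{W})^{-1}(\{\mu\})$ invariant, and finite intersections of closed invariant sets are closed and invariant. Items (c) and (e) are then literally Theorem \ref{mainTHMconserv}(b) and (c) with $X_\lambda$ in place of $X_0^\lambda$, since the arguments there use only the relations \eqref{SVFc} and the Lyapunov identity, both of which hold for $X_\lambda$.

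The two items requiring a genuinely new argument are (d) and (f). For (d), because $X_\lambda=X$ on $\Sigma\cap W$, the single orbit $\gamma$ of $X$ lying in $\Sigma\cap W$ is automatically an orbit of $X_\lambda$, so it preserves its nature; and for any bounded positive orbit of $X_\lambda|_{(I|_{W})^{-1}(\{\mu\})}$ in the same connected component $C$, its $\omega$-limit set (nonempty, connected, invariant) lies in $\Sigma\cap(I|_{W})^{-1}(\{\mu\})\cap C = \gamma$ by item (c) and the invariance of $C$, so the orbit approaches $\gamma$. For (f), I would first argue that an isolated point $x_e$ of $\Sigma\cap(I|_{W})^{-1}(\{\mu\})$ is necessarily an equilibrium of $X$: since $X$ conserves $D$ and $I$, the entire $X$-orbit through $x_e$ is confined to $\Sigma\cap(I|_{W})^{-1}(\{\mu\})$, and being connected it must collapse to $\{x_e\}$ by isolation, forcing $X(x_e)=0$ and hence $X_\lambda(x_e)=X(x_e)+X_0^\lambda(x_e)=0$; asymptotic stability then follows from the strict Lyapunov function $F$ on a neighborhood of $x_e$ inside $(I|_{W})^{-1}(\{\mu\})$, exactly as in Proposition \ref{propok}(b), using $F(x_e)=0$, $F>0$ off $x_e$, and $\mathcal{L}_{X_\lambda}F=-2\lambda F<0$ off $x_e$.

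The main obstacle is conceptual rather than computational: recognizing that the whole theorem reduces to the two structural facts above, namely the Lyapunov identity $\mathcal{L}_{X_\lambda}F=-2\lambda F$ and the vanishing of $X_0^\lambda$ on $\Sigma\cap W$ (so that $X_\lambda=X$ there). The only verifications beyond Theorem \ref{mainTHMconserv} are the orbit-preservation in (d) and the implication in (f) that isolation forces $X(x_e)=0$; both rest on the confinement of $X$-orbits to $\Sigma\cap(I|_{W})^{-1}(\{\mu\})$ guaranteed by $X$ conserving $D$ and $I$.
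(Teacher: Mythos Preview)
Your proposal is correct and follows essentially the same route as the paper: you introduce the Lyapunov function $F=\sum_i(D_i-d_i)^2$, derive $\mathcal{L}_{X_\lambda}F=-2\lambda F$ from the fact that $X_\lambda$ satisfies \eqref{SVFc} (by Theorem \ref{thmdin}), and then run the $\omega$-limit and strict Lyapunov arguments exactly as in Theorem \ref{mainTHM}, Proposition \ref{propok}, and Theorem \ref{mainTHMconserv}. The only cosmetic differences are that in (a) you use the pointwise identity $\mathrm{d}F(X_\lambda)=0$ at an equilibrium where the paper passes through the constant integral curve, and in (f) you deduce $X(x_e)=0$ first (via connectedness of the $X$-orbit confined to the isolated point) whereas the paper argues directly that an isolated point of an $X_\lambda$-invariant set is an equilibrium of $X_\lambda$; both are equivalent.
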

\begin{proof}
Let us define the smooth function $F:\operatorname{Mrk}((D,I))\rightarrow [0,\infty)$ given by
$$
F(x):=(D_1 (x)-d_1)^2 +\dots + (D_p (x)-d_p)^2, ~ (\forall) x\in\operatorname{Mrk}((D,I)).
$$
Let $\overline{x}\in \operatorname{Mrk}((D,I))$ be given, and let $t\in I_{\overline{x}}\subseteq\mathbb{R}\mapsto x(t;\overline{x})\in\operatorname{Mrk}((D,I))$ be the integral curve of the vector field $X_{\lambda}\in\mathfrak{X}(\operatorname{Mrk}((D,I)))$ such that $x(0;\overline{x})=\overline{x}$, where $I_{\overline{x}}\subseteq\mathbb{R}$ stands for the maximal domain of definition of the solution $x(\cdot;\overline{x})$.

Since $D=(D_1,\dots,D_p)$ and $I=(I_1,\dots,I_k)$ are first integrals of $X$, and the vector field $X_{0}^{\lambda}\in\mathfrak{X}(\operatorname{Mrk}((D,I)))$, $\lambda >0$ satisfies by construction the relations \eqref{SVFc}, then using the Theorem \eqref{thmdin}, it follows that the vector field $X_{\lambda}= X+ X_{0}^{\lambda}\in\mathfrak{X}(\operatorname{Mrk}((D,I)))$, $\lambda \geq 0$, satisfies the relations \eqref{SVFc} too. 

Hence, for each $\lambda \geq 0$, the vector field $X_{\lambda}\in\mathfrak{X}(\operatorname{Mrk}((D,I)))$ admits the vector type first integral $I|_{\operatorname{Mrk}((D,I))}$, and keeps dynamically invariant the set $\mathcal{S} \cap\operatorname{Mrk}((D,I)) = \Sigma^{D_1,\dots, D_p}_{d_1,\dots, d_p} \cap\operatorname{Mrk}((D,I)) =D^{-1}(\{(d_1,\dots,d_p)\})\cap\operatorname{Mrk}((D,I))$. Moreover, the following equalities hold true:
\begin{align}\label{lderc}
\begin{split}
\mathcal{L}_{X_{\lambda}}F &= \mathcal{L}_{X+ X_{0}^{\lambda}}F =\sum_{i=1}^{p}\mathcal{L}_{X+ X_{0}^{\lambda}}\left[(D_i - d_i)^2 \right] = \sum_{i=1}^{p}2(D_i - d_i)\mathcal{L}_{X+ X_{0}^{\lambda}}(D_i - d_i)\\
&= \sum_{i=1}^{p}2(D_i - d_i)\mathcal{L}_{X}(D_i - d_i)+\sum_{i=1}^{p}2(D_i - d_i)\mathcal{L}_{X_{0}^{\lambda}}(D_i - d_i)\\
&= \sum_{i=1}^{p}2(D_i - d_i)\mathcal{L}_{X}(D_i) + \sum_{i=1}^{p}2(D_i - d_i)\mathcal{L}_{X_{0}^{\lambda}}(D_i)\\
&= \sum_{i=1}^{p}2(D_i - d_i)\cdot 0+\sum_{i=1}^{p}2(D_i - d_i)\mathcal{L}_{X_{0}^{\lambda}}(D_i)\\
&=\sum_{i=1}^{p}2(D_i - d_i)(-\lambda) (D_i - d_i)= (-2\lambda)\sum_{i=1}^{p}(D_i - d_i)^2 \\
&= (-2\lambda) F.
\end{split}
\end{align}
Using the relation \eqref{lderc}, we obtain that
\begin{equation*}
\dfrac{\mathrm{d}}{\mathrm{d}t}F(x(t;\overline{x}))= (-2\lambda) F(x(t;\overline{x})), ~ (\forall)t\in I_{\overline{x}},
\end{equation*}
and hence 
\begin{equation}\label{grwc}
F(x(t;\overline{x}))= \exp(-2\lambda t) \cdot F(\overline{x}), ~ (\forall)t\in I_{\overline{x}}.
\end{equation}
Moreover, since the set of zeros of $F$ coincides with $\Sigma^{D_1,\dots,D_p}_{d_1,\dots,d_p}\cap \operatorname{Mrk}((D,I))$, the following sets equality holds true:
\begin{equation}\label{zerosetc}
\{x\in\operatorname{Mrk}((D,I)) : (\mathcal{L}_{X_{\lambda}}F)(x)=0\}=\Sigma^{D_1,\dots,D_p}_{d_1,\dots,d_p}\cap \operatorname{Mrk}((D,I)).
\end{equation}
\begin{itemize}
\item [(a)] Since by Theorem \eqref{mainTHMconserv} we have that $\mathcal{E}(X^{\lambda}_{0})=\Sigma^{D_1,\dots, D_p}_{d_1,\dots, d_p}\bigcap\operatorname{Mrk}((D,I))$, in order to complete the proof of the first statement, it is enough to show that $\mathcal{E}(X_{\lambda})=\mathcal{E}(X)\bigcap\mathcal{E}(X^{\lambda}_{0})$. We shall prove this equality by double inclusion. The inclusion $\mathcal{E}(X)\bigcap\mathcal{E}(X^{\lambda}_{0})\subseteq \mathcal{E}(X_{\lambda})$ is trivial since $X_{\lambda} =X+ X^{\lambda}_{0}$. In order to show the converse inclusion, $\mathcal{E}(X_{\lambda})\subseteq \mathcal{E}(X)\bigcap\mathcal{E}(X^{\lambda}_{0})$, let us pick some element $x_e \in \mathcal{E}(X_{\lambda})$. Then, since $X_{\lambda} =X+ X^{\lambda}_{0}$, $X_{\lambda}\in \mathfrak{X}(\operatorname{Mrk}((D,I)))$, it follows that $x_e \in \operatorname{Mrk}((D,I))$ and $X(x_e)+ X^{\lambda}_{0} (x_e)=0$. On the other hand, since $x_e \in \mathcal{E}(X_{\lambda})$, it follows that the integral curve of $X_{\lambda}$ starting from $x_e$, is constant, i.e., $x(t;x_e)=x_e$, for all $t\in\mathbb{R}$. Consequently, the relation \eqref{grwc} implies that $F(x_e)=\exp(-2\lambda t) \cdot F(x_e)$, for all $t\in\mathbb{R}$, and hence $F(x_e)=0$, which is in turn equivalent to $x_e \in \Sigma^{D_1,\dots, D_p}_{d_1,\dots, d_p}\bigcap\operatorname{Mrk}((D,I))$. As $\Sigma^{D_1,\dots, D_p}_{d_1,\dots, d_p}\bigcap\operatorname{Mrk}((D,I))=\mathcal{E}(X^{\lambda}_{0})$, it follows that $x_e \in \mathcal{E}(X^{\lambda}_{0})$, and consequently, since $X(x_e)+ X^{\lambda}_{0} (x_e)=0$ we get that $X(x_e) =0$, and hence $x_e \in \mathcal{E}(X)\bigcap\mathcal{E}(X^{\lambda}_{0})$. Since $x_e \in \mathcal{E}(X_{\lambda})$ was arbitrary chosen, it follows that $\mathcal{E}(X_{\lambda})\subseteq \mathcal{E}(X)\bigcap\mathcal{E}(X^{\lambda}_{0})$.
\item [(b)] In order to prove that $\Sigma^{D_1,\dots, D_p}_{d_1,\dots, d_p}\bigcap\operatorname{Mrk}((D,I))$ is a dynamically invariant set of each vector field $X_{\lambda}$, for every $\lambda \geq 0$, note that for $\lambda =0$ we obtain $X_{\lambda}=X$, and hence $\Sigma^{D_1,\dots, D_p}_{d_1,\dots, d_p}\bigcap\operatorname{Mrk}((D,I))$ is an invariant set of $X$, since $(D,I)$ is by definition a vector type first integral of $X$ and consequently both sets, $\Sigma^{D_1,\dots, D_p}_{d_1,\dots, d_p}$ and $\operatorname{Mrk}((D,I))$, are invariant. Hence, in order to complete the proof of this statement, it remains to show that $\Sigma^{D_1,\dots, D_p}_{d_1,\dots, d_p}\bigcap\operatorname{Mrk}((D,I))$ is a dynamically invariant set of each vector field $X_{\lambda}$, for every $\lambda > 0$. In order to do that, let us fix some $\lambda > 0$ and $\overline{x}\in \Sigma^{D_1,\dots, D_p}_{d_1,\dots, d_p}\bigcap\operatorname{Mrk}((D,I))$. We shall show that the integral curve of $X_{\lambda}$ starting from $\overline{x}$ at $t=0$, verifies that $x(t;\overline{x})\in \Sigma^{D_1,\dots, D_p}_{d_1,\dots, d_p}\bigcap\operatorname{Mrk}((D,I))$, for all $t\in I_{\overline{x}}$, where $I_{\overline{x}}\subseteq \mathbb{R}$ stands for the maximal domain of definition of the solution $x(\cdot;\overline{x})$. Using the relation \eqref{grwc}, we get that $F(x(t;\overline{x}))= \exp(-2\lambda t) \cdot F(\overline{x}), ~ (\forall)t\in I_{\overline{x}}$. Since $\overline{x}\in \Sigma^{D_1,\dots, D_p}_{d_1,\dots, d_p}\bigcap\operatorname{Mrk}((D,I))$ it follows that $F(\overline{x})=0$, and consequently we obtain that $F(x(t;\overline{x}))=0, ~ (\forall)t\in I_{\overline{x}}$, and so $x(t;\overline{x})\in \Sigma^{D_1,\dots, D_p}_{d_1,\dots, d_p}\bigcap\operatorname{Mrk}((D,I))$, for all $t\in I_{\overline{x}}$. Since $\overline{x}$ was arbitrary chosen in $\Sigma^{D_1,\dots, D_p}_{d_1,\dots, d_p}\bigcap\operatorname{Mrk}((D,I))$, it follows that $\Sigma^{D_1,\dots, D_p}_{d_1,\dots, d_p}\bigcap\operatorname{Mrk}((D,I))$ is a dynamically invariant set for $X_{\lambda}$. Moreover, since $I|_{\operatorname{Mrk}((D,I))}$ is a vector type first integral of $X_{\lambda}$ it follows that for each regular value $\mu$ of $I|_{\operatorname{Mrk}((D,I))}$,  
the set $\Sigma^{D_1,\dots,D_p}_{d_1,\dots,d_p}\cap (I|_{\operatorname{Mrk}((D,I))})^{-1}(\{\mu\})$ is a closed and dynamically invariant set of the vector field ${X_{\lambda}}$, since both sets, $\Sigma^{D_1,\dots,D_p}_{d_1,\dots,d_p}$ and $(I|_{\operatorname{Mrk}((D,I))})^{-1}(\{\mu\})$, are closed and invariant.
\item [(c)] In order to prove this item, pick an arbitrary element $\overline{x}\in (I|_{\operatorname{Mrk}((D,I))})^{-1}(\{\mu\})$, such that the set $\{x(t;\overline{x}): t\geq 0\}$ is bounded, where $t\mapsto x(t;\overline{x})$ stands for the integral curve of the vector field ${X_{\lambda}}|_{(I|_{\operatorname{Mrk}((D,I))})^{-1}(\{\mu\})}$, ($\lambda >0$), starting from $\overline{x}$ at $t=0$. We shall show now that the $\omega-$limit set $\omega(\overline{x})$ is a subset of $\Sigma^{D_1,\dots,D_p}_{d_1,\dots,d_p}\cap (I|_{\operatorname{Mrk}((D,I))})^{-1}(\{\mu\})$. Indeed, let $y\in\omega(\overline{x})$ be arbitrary chosen. Then, there exists a sequence $(t_{n})_{n\in\mathbb{N}}\subset [0,\infty)$, $\lim_{n\rightarrow \infty}t_n =\infty$, such that $\lim_{n\rightarrow \infty}x(t_n;\overline{x})=y$. Since the set $\{x(t;\overline{x}): t\geq 0\}$ is bounded, we get that $[0,\infty)\subset I_{\overline{x}}$, and hence the relation \eqref{grwc} implies that
\begin{equation}\label{frel}
F(x(t;\overline{x}))= \exp(-2\lambda t) \cdot F(\overline{x}), ~ (\forall)t\in [0,\infty).
\end{equation}
Consequently, for $t=t_n \geq 0$, $n\in\mathbb{N}$, the equality \eqref{frel} becomes
\begin{equation*}
F(x(t_n;\overline{x}))= \exp(-2\lambda t_n) \cdot F(\overline{x}), ~ (\forall)n\in \mathbb{N}.
\end{equation*}
Since $\lim_{n\rightarrow \infty} t_n =\infty $, $\lambda >0$, $\lim_{n\rightarrow \infty}x(t_n;\overline{x})=y$, and $F$ is continuous, we obtain that $F(y)=0$, and hence taking into account that the set of zeros of $F$ is $\Sigma^{D_1,\dots, D_p}_{d_1,\dots, d_p}\bigcap\operatorname{Mrk}((D,I))$, it follows that $y\in \Sigma^{D_1,\dots, D_p}_{d_1,\dots, d_p}\bigcap\operatorname{Mrk}((D,I))$. Since $\overline{x}\in(I|_{\operatorname{Mrk}((D,I))})^{-1}(\{\mu\})$, and $(I|_{\operatorname{Mrk}((D,I))})^{-1}(\{\mu\})\subset\operatorname{Mrk}((D,I))$ is closed and dynamically invariant, it follows that $y=\lim_{n\rightarrow \infty}x(t_n;\overline{x})\in(I|_{\operatorname{Mrk}((D,I))})^{-1}(\{\mu\})$, and consequently $y\in \Sigma^{D_1,\dots,D_p}_{d_1,\dots,d_p}\cap (I|_{\operatorname{Mrk}((D,I))})^{-1}(\{\mu\})$. As $y\in\omega(\overline{x})$ was arbitrary chosen, we obtain that $\omega(\overline{x})\subseteq \Sigma^{D_1,\dots,D_p}_{d_1,\dots,d_p}\cap (I|_{\operatorname{Mrk}((D,I))})^{-1}(\{\mu\})$. Since $x(t;\overline{x})\rightarrow \omega(\overline{x})\subseteq \Sigma^{D_1,\dots,D_p}_{d_1,\dots,d_p}\cap (I|_{\operatorname{Mrk}((D,I))})^{-1}(\{\mu\})$ for $t\rightarrow \infty$, it follows that $x(t;\overline{x})\rightarrow \Sigma^{D_1,\dots,D_p}_{d_1,\dots,d_p}\cap (I|_{\operatorname{Mrk}((D,I))})^{-1}(\{\mu\})$ for $t\rightarrow \infty$.
\item [(d)] Let $\gamma$ be an orbit of the vector field $X$ such that $\gamma \subset \Sigma^{D_1,\dots,D_p}_{d_1,\dots,d_p}\cap (I|_{\operatorname{Mrk}((D,I))})^{-1}(\{\mu\})$. Since $\Sigma^{D_1,\dots,D_p}_{d_1,\dots,d_p}\cap (I|_{\operatorname{Mrk}((D,I))})^{-1}(\{\mu\})\subseteq \Sigma^{D_1,\dots,D_p}_{d_1,\dots,d_p}\bigcap\operatorname{Mrk}((D,I))=\mathcal{E}(X^{\lambda}_{0})$ it follows that $X_{0}^{\lambda}(\gamma)= \{0\}$. As $\Sigma^{D_1,\dots,D_p}_{d_1,\dots,d_p}\cap (I|_{\operatorname{Mrk}((D,I))})^{-1}(\{\mu\})$ is a closed and dynamically invariant set of the vector field ${X_{\lambda}}=X+X_{0}^{\lambda}$, for every $\lambda \geq 0$, it follows that $\gamma$ is also an orbit of the same nature of the vector field ${X_{\lambda}}$, for every $\lambda \geq 0$. The rest of the proof is a direct consequence of $(c)$.
\item [(e)] The proof follows directly from $(c)$ since $K\subset (I|_{\operatorname{Mrk}((D,I))})^{-1}(\{\mu\})$ being a compact and positively invariant set, implies that for every $\overline{x}\in K$, the integral curve of ${X_{\lambda}}|_{(I|_{\operatorname{Mrk}((D,I))})^{-1}(\{\mu\})}$ starting from $\overline{x}$ at $t=0$, remains in $K$ for all $t\geq 0$, and hence the set $\{x(t;\overline{x}): t\geq 0\}$ is bounded.
\item [(f)] Let us denote by $C^{\mu}$ a connected component of $(I|_{\operatorname{Mrk}((D,I))})^{-1}(\{\mu\})$ whose intersection with the invariant set $\Sigma^{D_1,\dots, D_p}_{d_1,\dots, d_p}\cap\operatorname{Mrk}((D,I))$ contains isolated points. In order to complete the proof, it is enough to construct a strict Lyapunov function associated to each isolated equilibrium state $x_e \in\Sigma^{D_1,\dots,D_p}_{d_1,\dots,d_p}\cap C^{\mu}$. Note that the set $\Sigma^{D_1,\dots,D_p}_{d_1,\dots,d_p}\cap C^{\mu}$ is dynamically invariant, and since it contains isolated points, each isolated point must be an equilibrium point of the vector field ${X_{\lambda}}|_{(I|_{\operatorname{Mrk}((D,I))})^{-1}(\{\mu\})}$, for every $\lambda >0$. Let $x_e \in\Sigma^{D_1,\dots,D_p}_{d_1,\dots,d_p}\cap C^{\mu}$ be such an equilibrium point of ${X_{\lambda}}|_{(I|_{\operatorname{Mrk}((D,I))})^{-1}(\{\mu\})}$. Let us denote $C_{x_e}^{\mu}:=\Sigma^{D_1,\dots,D_p}_{d_1,\dots,d_p}\cap C^{\mu}$. Since $x_e \in C_{x_e}^{\mu}$ is an isolated point of $C_{x_e}^{\mu}$, there exists $U_{x_e} \subseteq \operatorname{Mrk}((D,I))$, an open neighborhood of $x_e$, such that $U_{x_e}\cap C_{x_e}^{\mu}=\{x_e \}$. By shrinking $U_{x_e}$ if necessary, one can suppose that    $U_{x_e}\cap\Sigma^{D_1,\dots,D_p}_{d_1,\dots,d_p}\cap(I|_{\operatorname{Mrk}((D,I))})^{-1}(\{\mu\})=\{x_e\}$.

Let us define now the smooth function $F: U_{x_e}\cap(I|_{\operatorname{Mrk}((D,I))})^{-1}(\{\mu\}) \rightarrow [0,\infty)$ given by
$$
F(x):=(D_1 (x)-d_1)^2 +\dots + (D_p (x)-d_p)^2, ~ (\forall) x\in U_{x_e}\cap(I|_{\operatorname{Mrk}((D,I))})^{-1}(\{\mu\}).
$$
Since by hypothesis we have that $$U_{x_e}\cap\Sigma^{D_1,\dots,D_p}_{d_1,\dots,d_p}\cap(I|_{\operatorname{Mrk}((D,I))})^{-1}(\{\mu\})=\{x_e\},$$ and the set of zeros of $F$ is given by $U_{x_e}\cap(I|_{\operatorname{Mrk}((D,I))})^{-1}(\{\mu\})\cap\Sigma^{D_1,\dots,D_p}_{d_1,\dots,d_p}$, it follows that $x_e$ is the unique solution of the equation $F(x)=0$ in $U_{x_e}\cap(I|_{\operatorname{Mrk}((D,I))})^{-1}(\{\mu\})$.
A similar relation to \eqref{lderc}, implies that
\begin{equation*}
(\mathcal{L}_{{X_{\lambda}}|_{(I|_{\operatorname{Mrk}((D,I))})^{-1}(\{\mu\})}}F) (x)= (-2\lambda)F(x), ~(\forall)x\in U_{x_e}\cap(I|_{\operatorname{Mrk}((D,I))})^{-1}(\{\mu\}).
\end{equation*}
Hence, we get that $F(x_e)=0$, $F(x)>0$, $(\mathcal{L}_{{X_{\lambda}}|_{(I|_{\operatorname{Mrk}((D,I))})^{-1}(\{\mu\})}}F) (x)<0$, for every $x\in \left( U_{x_e}\cap(I|_{\operatorname{Mrk}((D,I))})^{-1}(\{\mu\})\right)\setminus\{x_e\}$, and consequently $F$ is a strict Lyapunov function associated to the equilibrium point $x_e$.
\end{itemize}
\end{proof}

\section{Application to conservative dynamics with a prescribed foliated invariant set}

\medskip
The aim of this section is to present the correspondents of the main results of the previous section, in the case when the invariant set $\mathcal{S}\cap \operatorname{Mrk}((D,I))=\Sigma^{D_1,\dots, D_p}_{d_1,\dots, d_p}\cap \operatorname{Mrk}((D,I))$ is foliated by the level sets of regular values of the vector type first integral $D^{p^{\prime}\rightarrow }|_{\operatorname{Mrk}((D,I))}:=({D_{p^{\prime}+1}}|_{\operatorname{Mrk}((D,I))},\dots, {D_{p}}|_{\operatorname{Mrk}((D,I))})$, where $p^{\prime}$ is a natural number, such that $0<p^{\prime}<p$. More precisely, we consider a dynamical system $\mathfrak{S}$  generated by a smooth vector field $X\in\mathfrak{X}(U)$ defined on an open subset $U\subseteq\mathbb{R}^n$, which admits $k+p$ smooth first integrals ($1< k+p < n$, $k\geq 0$), $I_1,\dots, I_k, D_1,\dots, D_p \in\mathcal{C}^{\infty}(U,\mathbb{R})$. Let $\mathcal{S}$ be an invariant set of $\mathfrak{S}$, given by the level set of the vector type first integral $D$ corresponding to some regular (or singular) value $d:=(d_1,\dots,d_p)\in\operatorname{Im}(D)$, i.e., $\mathcal{S}= \Sigma^{D_1,\dots, D_p}_{d_1,\dots, d_p}:=D^{-1}(\{(d_1,\dots,d_p)\})$.

Let us fix some $p^{\prime}\in\mathbb{N}$ such that $0 < p^{\prime}\leq p$. Then using the Theorem \eqref{MTD}, a particular solution of the system of equations 
\begin{align}\label{SVFcp}
\begin{split}
&\mathcal{L}_{X}D_1 = (-\lambda)(D_1 - d_1), \dots, \mathcal{L}_{X}D_{p^{\prime}} = (-\lambda)(D_{p^{\prime}} - d_{p^{\prime}}),\\
&\mathcal{L}_{X}D_{p^{\prime}+1} =\dots =\mathcal{L}_{X}D_{p}=\mathcal{L}_{X}I_1 =\dots = \mathcal{L}_{X}I_k =0, 
\end{split}
\end{align}
is given by the vector field $X= X_{0}^{\lambda ;p^{\prime}} \in\mathfrak{X}(\operatorname{Mrk}((D,I)))$ defined by
\begin{equation}\label{xzerop}
X_{0}^{\lambda ;p^{\prime}}=\left\| \bigwedge_{i=1}^{p} \nabla D_i\wedge\bigwedge_{j=1}^{k} \nabla I_j \right\|_{k+p}^{-2}\cdot\sum_{i=1}^{p^{\prime}}(-1)^{n-i}(-\lambda) (D- d_i) \Theta_i, 
\end{equation}
where
$$
\Theta_i = \star\left[ \bigwedge_{j=1, j\neq i}^{p^{\prime}} \nabla D_j \wedge \bigwedge_{m={p^{\prime}+1}}^{p} \nabla D_{m}\wedge \bigwedge_{l=1}^{k} \nabla I_l  \wedge\star\left(\bigwedge_{j=1}^{p} \nabla D_j\wedge\bigwedge_{l=1}^{k} \nabla I_l \right)\right].
$$
Note that by construction, $X_{0}^{\lambda; p^{\prime}}\in\mathfrak{X}(\operatorname{Mrk}((D,I)))$ admits the vector type first integrals $I|_{\operatorname{Mrk}((D,I))}:=({I_1}|_{\operatorname{Mrk}((D,I))},\dots,{I_k}|_{\operatorname{Mrk}((D,I))})$ and $$D^{p^{\prime}\rightarrow }|_{\operatorname{Mrk}((D,I))}:=({D_{p^{\prime}+1}}|_{\operatorname{Mrk}((D,I))},\dots, {D_{p}}|_{\operatorname{Mrk}((D,I))}),$$ keeps $$(D_{1},\dots, D_{p^{\prime}})^{-1}(\{(d_1,\dots,d_{p^{\prime}})\})\cap \operatorname{Mrk}((D,I))$$ dynamically invariant, as well as the set $$(D_{1},\dots, D_{p^{\prime}})^{-1}(\{(d_1,\dots,d_{p^{\prime}})\})\cap (D^{p^{\prime}\rightarrow }|_{\operatorname{Mrk}((D,I))})^{-1}(\{\nu\})\cap (I|_{\operatorname{Mrk}((D,I))})^{-1}(\{\mu\}),$$ for each regular value $\mu$ of the vector type first integral $I|_{\operatorname{Mrk}((D,I))}$, and respectively for each regular value $\nu$ of the vector type first integral $D^{p^{\prime}\rightarrow }|_{\operatorname{Mrk}((D,I))}$. Let us state now a theorem which points out some of the main properties of the vector field $X_{0}^{\lambda; p^{\prime}}\in\mathfrak{X}(\operatorname{Mrk}((D,I)))$.

\begin{theorem}\label{mainTHMconservp}
Let $X_{0}^{\lambda; p^{\prime}}\in\mathfrak{X}(\operatorname{Mrk}((D,I)))$ be the vector field defined by the relation \eqref{xzerop}. Assuming that $(D_{1},\dots, D_{p^{\prime}})^{-1}(\{(d_1,\dots,d_{p^{\prime}})\})\cap \operatorname{Mrk}((D,I))\neq \emptyset$, the following statements hold true.
\begin{itemize}
\item [(a)] The set of the equilibrium states of the vector field $X_{0}^{\lambda; p^{\prime}}$ is given by $$\mathcal{E}(X_{0}^{\lambda; p^{\prime}})=(D_{1},\dots, D_{p^{\prime}})^{-1}(\{(d_1,\dots,d_{p^{\prime}})\})\cap \operatorname{Mrk}((D,I)).$$
\item [(b)] For each regular value $\mu$ of $I|_{\operatorname{Mrk}((D,I))}$ and respectively for each regular value $\nu$ of  $D^{p^{\prime}\rightarrow }|_{\operatorname{Mrk}((D,I))}$, the vector field ${X_{0}^{\lambda; p^{\prime}}}|_{(D^{p^{\prime}\rightarrow }|_{\operatorname{Mrk}((D,I))})^{-1}(\{\nu\})\cap (I|_{\operatorname{Mrk}((D,I))})^{-1}(\{\mu\})}$ admits the attracting set $$(D_{1},\dots, D_{p^{\prime}})^{-1}(\{(d_1,\dots,d_{p^{\prime}})\})\cap (D^{p^{\prime}\rightarrow }|_{\operatorname{Mrk}((D,I))})^{-1}(\{\nu\})\cap (I|_{\operatorname{Mrk}((D,I))})^{-1}(\{\mu\}).$$ More precisely, for every $\overline{x} \in (D^{p^{\prime}\rightarrow }|_{\operatorname{Mrk}((D,I))})^{-1}(\{\nu\})\cap (I|_{\operatorname{Mrk}((D,I))})^{-1}(\{\mu\})$, such that the set $\{x(t;\overline{x}): t\geq 0\}$ is bounded, $$x(t;\overline{x})\rightarrow (D_{1},\dots, D_{p^{\prime}})^{-1}(\{(d_1,\dots,d_{p^{\prime}})\})\cap (D^{p^{\prime}\rightarrow }|_{\operatorname{Mrk}((D,I))})^{-1}(\{\nu\})\cap (I|_{\operatorname{Mrk}((D,I))})^{-1}(\{\mu\}),$$ for $t\rightarrow \infty$.
\item [(b')] Suppose that $(d_{p^{\prime}+1},\dots, d_p)$ is a regular value of $(D_{p^{\prime}+1},\dots, D_p)$. Then for each regular value $\mu$ of $I|_{\operatorname{Mrk}((D,I))}$, the vector field $${X_{0}^{\lambda; p^{\prime}}}|_{(D^{p^{\prime}\rightarrow }|_{\operatorname{Mrk}((D,I))})^{-1}(\{(d_{p^{\prime}+1},\dots,d_p)\})\cap (I|_{\operatorname{Mrk}((D,I))})^{-1}(\{\mu\})}$$ admits the attracting set $\Sigma^{D_1,\dots,D_p}_{d_1,\dots,d_p}\cap (I|_{\operatorname{Mrk}((D,I))})^{-1}(\{\mu\})$. More precisely, for every $\overline{x} \in (D^{p^{\prime}\rightarrow }|_{\operatorname{Mrk}((D,I))})^{-1}(\{(d_{p^{\prime}+1},\dots,d_p)\})\cap (I|_{\operatorname{Mrk}((D,I))})^{-1}(\{\mu\})$, such that the set $\{x(t;\overline{x}): t\geq 0\}$ is bounded, $$x(t;\overline{x})\rightarrow \Sigma^{D_1,\dots,D_p}_{d_1,\dots,d_p}\cap (I|_{\operatorname{Mrk}((D,I))})^{-1}(\{\mu\}),$$ for $t\rightarrow \infty$.
\item [(c)] Let $\mu,\nu$ be arbitrary fixed regular values of $I|_{\operatorname{Mrk}((D,I))}$ and respectively $D^{p^{\prime}\rightarrow }|_{\operatorname{Mrk}((D,I))}$. Then for every positively invariant compact set $K\subset (D^{p^{\prime}\rightarrow }|_{\operatorname{Mrk}((D,I))})^{-1}(\{\nu\})\cap (I|_{\operatorname{Mrk}((D,I))})^{-1}(\{\mu\})$, and for every $\overline{x}\in K$ we have that $$\omega(\overline{x})\subseteq (D_{1},\dots, D_{p^{\prime}})^{-1}(\{(d_1,\dots,d_{p^{\prime}})\})\cap (D^{p^{\prime}\rightarrow }|_{\operatorname{Mrk}((D,I))})^{-1}(\{\nu\})\cap (I|_{\operatorname{Mrk}((D,I))})^{-1}(\{\mu\}),$$ and consequently $$x(t;\overline{x})\rightarrow (D_{1},\dots, D_{p^{\prime}})^{-1}(\{(d_1,\dots,d_{p^{\prime}})\})\cap (D^{p^{\prime}\rightarrow }|_{\operatorname{Mrk}((D,I))})^{-1}(\{\nu\})\cap (I|_{\operatorname{Mrk}((D,I))})^{-1}(\{\mu\})$$ for $t\rightarrow \infty$. Particularly, if $I|_{\operatorname{Mrk}((D,I))}$ or $D^{p^{\prime}\rightarrow }|_{\operatorname{Mrk}((D,I))}$ are proper maps, then the set $(D^{p^{\prime}\rightarrow }|_{\operatorname{Mrk}((D,I))})^{-1}(\{\nu\})\cap (I|_{\operatorname{Mrk}((D,I))})^{-1}(\{\mu\})$ is compact in $\operatorname{Mrk}((D,I))$, dynamically invariant, and consequently for every $\overline{x}\in (D^{p^{\prime}\rightarrow }|_{\operatorname{Mrk}((D,I))})^{-1}(\{\nu\})\cap (I|_{\operatorname{Mrk}((D,I))})^{-1}(\{\mu\})$, we have that $$x(t;\overline{x})\rightarrow (D_{1},\dots, D_{p^{\prime}})^{-1}(\{(d_1,\dots,d_{p^{\prime}})\})\cap (D^{p^{\prime}\rightarrow }|_{\operatorname{Mrk}((D,I))})^{-1}(\{\nu\})\cap (I|_{\operatorname{Mrk}((D,I))})^{-1}(\{\mu\})$$ for $t\rightarrow \infty$.
\item [(c')] Suppose that $(d_{p^{\prime}+1},\dots, d_p)$ is a regular value of $(D_{p^{\prime}+1},\dots, D_p)$. Let $\mu$ be an arbitrary fixed regular value of $I|_{\operatorname{Mrk}((D,I))}$. Then for every positively invariant compact set $$K\subset (D^{p^{\prime}\rightarrow }|_{\operatorname{Mrk}((D,I))})^{-1}(\{(d_{p^{\prime}+1},\dots,d_p)\})\cap (I|_{\operatorname{Mrk}((D,I))})^{-1}(\{\mu\}),$$ and for every $\overline{x}\in K$ we have that $\omega(\overline{x})\subseteq \Sigma^{D_1,\dots,D_p}_{d_1,\dots,d_p}\cap (I|_{\operatorname{Mrk}((D,I))})^{-1}(\{\mu\})$, and consequently $$x(t;\overline{x})\rightarrow \Sigma^{D_1,\dots,D_p}_{d_1,\dots,d_p}\cap (I|_{\operatorname{Mrk}((D,I))})^{-1}(\{\mu\})$$ for $t\rightarrow \infty$. Particularly, if $I|_{\operatorname{Mrk}((D,I))}$ or $D^{p^{\prime}\rightarrow }|_{\operatorname{Mrk}((D,I))}$ are proper maps, then the set $(D^{p^{\prime}\rightarrow }|_{\operatorname{Mrk}((D,I))})^{-1}(\{(d_{p^{\prime}+1},\dots,d_p)\})\cap (I|_{\operatorname{Mrk}((D,I))})^{-1}(\{\mu\})$ is compact in $\operatorname{Mrk}((D,I))$, dynamically invariant, and consequently for every $$\overline{x}\in (D^{p^{\prime}\rightarrow }|_{\operatorname{Mrk}((D,I))})^{-1}(\{(d_{p^{\prime}+1},\dots,d_p)\})\cap (I|_{\operatorname{Mrk}((D,I))})^{-1}(\{\mu\}),$$ we have that $$x(t;\overline{x})\rightarrow \Sigma^{D_1,\dots,D_p}_{d_1,\dots,d_p}\cap (I|_{\operatorname{Mrk}((D,I))})^{-1}(\{\mu\})$$ for $t\rightarrow \infty$.
\item [(d)] Suppose $\mu$ is a regular value of $I|_{\operatorname{Mrk}((D,I))}$ and $\nu$ is a regular value of $D^{p^{\prime}\rightarrow }|_{\operatorname{Mrk}((D,I))}$, such that  $$(D_{1},\dots, D_{p^{\prime}})^{-1}(\{(d_1,\dots,d_{p^{\prime}})\})\cap (D^{p^{\prime}\rightarrow }|_{\operatorname{Mrk}((D,I))})^{-1}(\{\nu\})\cap (I|_{\operatorname{Mrk}((D,I))})^{-1}(\{\mu\})$$ contains isolated points. Then each such a point $$\overline{x}\in(D_{1},\dots, D_{p^{\prime}})^{-1}(\{(d_1,\dots,d_{p^{\prime}})\})\cap (D^{p^{\prime}\rightarrow }|_{\operatorname{Mrk}((D,I))})^{-1}(\{\nu\})\cap (I|_{\operatorname{Mrk}((D,I))})^{-1}(\{\mu\})$$ is an asymptotically stable equilibrium point of the vector field $$X_{0}^{\lambda;p^{\prime}}|_{(D^{p^{\prime}\rightarrow }|_{\operatorname{Mrk}((D,I))})^{-1}(\{\nu\})\cap (I|_{\operatorname{Mrk}((D,I))})^{-1}(\{\mu\})}.$$
\item [(d')] Suppose that $(d_{p^{\prime}+1},\dots, d_p)$ is a regular value of $(D_{p^{\prime}+1},\dots, D_p)$. Suppose moreover that $\mu$ is a regular value of $I|_{\operatorname{Mrk}((D,I))}$, such that  $\Sigma^{D_1,\dots,D_p}_{d_1,\dots,d_p}\cap (I|_{\operatorname{Mrk}((D,I))})^{-1}(\{\mu\})$ contains isolated points. Then each such a point $\overline{x}\in\Sigma^{D_1,\dots,D_p}_{d_1,\dots,d_p}\cap (I|_{\operatorname{Mrk}((D,I))})^{-1}(\{\mu\})$ is an asymptotically stable equilibrium point of $$X_{0}^{\lambda;p^{\prime}}|_{(D^{p^{\prime}\rightarrow }|_{\operatorname{Mrk}((D,I))})^{-1}(\{(d_{p^{\prime}+1},\dots,d_p)\})\cap(I|_{\operatorname{Mrk}((D,I))})^{-1}(\{\mu\})}.$$
\end{itemize}
\end{theorem}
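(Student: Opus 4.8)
The plan is to produce a strict Lyapunov function for each such isolated point, mirroring the arguments used in Proposition \ref{propok}(b) and in item (f) of Theorem \ref{THMcons1}, but adapted to the foliated setting. Throughout, write $M := (D^{p^{\prime}\rightarrow }|_{\operatorname{Mrk}((D,I))})^{-1}(\{(d_{p^{\prime}+1},\dots,d_p)\})\cap (I|_{\operatorname{Mrk}((D,I))})^{-1}(\{\mu\})$ for the submanifold on which the vector field is restricted.

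First I would check that each such point $\overline{x}$ is genuinely an equilibrium of the restricted system. Since $\overline{x}\in\Sigma^{D_1,\dots,D_p}_{d_1,\dots,d_p}$, in particular $D_i(\overline{x})=d_i$ for every $i\in\{1,\dots,p^{\prime}\}$, so item (a) gives $\overline{x}\in\mathcal{E}(X_{0}^{\lambda; p^{\prime}})$. The regular-value hypotheses on $(d_{p^{\prime}+1},\dots,d_p)$ and on $\mu$ guarantee that $M$ is a smooth submanifold, and since $X_{0}^{\lambda; p^{\prime}}$ conserves $D_{p^{\prime}+1},\dots,D_p,I_1,\dots,I_k$ by construction \eqref{SVFcp}, it is tangent to $M$; hence the restriction ${X_{0}^{\lambda; p^{\prime}}}|_{M}$ is a well-defined dynamical system having $\overline{x}$ as an equilibrium.

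Next I would pick an open neighborhood $U_{\overline{x}}\subseteq\operatorname{Mrk}((D,I))$ of $\overline{x}$ and define on $U_{\overline{x}}\cap M$ the candidate function built only from the dissipated components,
$$
F(x):=(D_1(x)-d_1)^2+\dots+(D_{p^{\prime}}(x)-d_{p^{\prime}})^2 .
$$
Repeating the computation \eqref{lderc} but using only the first $p^{\prime}$ dissipation relations of \eqref{SVFcp} yields $\mathcal{L}_{X_{0}^{\lambda; p^{\prime}}}F=(-2\lambda)F$. The crux is to identify the zero set of $F$ inside $M$: on $M$ the integrals $D_{p^{\prime}+1},\dots,D_p$ are pinned at the target values $d_{p^{\prime}+1},\dots,d_p$, so for $x\in M$ the condition $F(x)=0$ is equivalent to $D_i(x)=d_i$ for all $i\in\{1,\dots,p\}$, that is, to $x\in\Sigma^{D_1,\dots,D_p}_{d_1,\dots,d_p}\cap M$. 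Because $\overline{x}$ is isolated in $\Sigma^{D_1,\dots,D_p}_{d_1,\dots,d_p}\cap (I|_{\operatorname{Mrk}((D,I))})^{-1}(\{\mu\})$, one may shrink $U_{\overline{x}}$ so that $U_{\overline{x}}\cap\Sigma^{D_1,\dots,D_p}_{d_1,\dots,d_p}\cap M=\{\overline{x}\}$, whence $\overline{x}$ is the unique zero of $F$ in $U_{\overline{x}}\cap M$.

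Collecting these facts finishes the argument: $F(\overline{x})=0$, while $F(x)>0$ and $\mathcal{L}_{X_{0}^{\lambda; p^{\prime}}}F(x)=(-2\lambda)F(x)<0$ for every $x\in(U_{\overline{x}}\cap M)\setminus\{\overline{x}\}$, so $F$ is a strict Lyapunov function and $\overline{x}$ is asymptotically stable as an equilibrium of ${X_{0}^{\lambda;p^{\prime}}}|_{M}$. The main obstacle is precisely the zero-set identification, which is where the foliation hypothesis is essential: pinning $D_{p^{\prime}+1},\dots,D_p$ to $d_{p^{\prime}+1},\dots,d_p$ on $M$ collapses the degeneracy of item (d), replacing the attracting set $(D_1,\dots,D_{p^{\prime}})^{-1}(\{(d_1,\dots,d_{p^{\prime}})\})\cap M$ by the full $\Sigma^{D_1,\dots,D_p}_{d_1,\dots,d_p}\cap M$, while the regular-value assumption is what keeps $M$ a manifold on which the restriction is meaningful.
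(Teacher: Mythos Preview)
Your argument for item (d') is correct and is exactly the approach indicated by the paper: the paper's proof simply states that one follows the proofs of Theorem \ref{mainTHM} and Proposition \ref{propok} with $F$ replaced by $\widetilde{F}(x)=\sum_{i=1}^{p'}(D_i(x)-d_i)^2$, restricted to the leaf $(D^{p'\rightarrow}|_{\operatorname{Mrk}((D,I))})^{-1}(\{\nu\})\cap(I|_{\operatorname{Mrk}((D,I))})^{-1}(\{\mu\})$, which is precisely your construction. Your observation that on $M$ the conditions $D_{p'+1}=d_{p'+1},\dots,D_p=d_p$ are automatically satisfied, so the zero set of $\widetilde{F}|_M$ collapses to $\Sigma^{D_1,\dots,D_p}_{d_1,\dots,d_p}\cap M$, is the key point distinguishing (d') from (d), and you have identified it correctly.
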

\begin{proof}
The proof follows mimetically those of Theorem \eqref{mainTHM} and Proposition \eqref{propok}, where the smooth function $F$ is replaced by $\widetilde{F}\in\mathcal{C}^{\infty}(\operatorname{Mrk}((D,I)),\mathbb{R})$
$$
\widetilde{F}(x):=(D_1 (x)-d_1)^2 +\dots + (D_{p^{\prime}} (x)-d_{p^{\prime}})^2, ~ (\forall) x\in \operatorname{Mrk}((D,I)),
$$
or, by its restriction to certain smooth manifolds of the type 
$$(D^{p^{\prime}\rightarrow }|_{\operatorname{Mrk}((D,I))})^{-1}(\{\nu\})\cap (I|_{\operatorname{Mrk}((D,I))})^{-1}(\{\mu\}).$$
\end{proof}

In the same way we did in the previous section, we construct a smooth family of dynamical systems $\{\mathfrak{S}_{\lambda}\}_{\lambda \geq 0}$ (defined on the open subset $\operatorname{Mrk}((D,I))\subseteq U$), such that $\mathfrak{S}_{0} =\mathfrak{S}|_{\operatorname{Mrk}((D,I))}$, and for all $\lambda >0$, the associated dynamical system, $\mathfrak{S}_{\lambda}$, admits the vector type first integrals $I|_{\operatorname{Mrk}((D,I))}$ and $D^{p^{\prime}\rightarrow }|_{\operatorname{Mrk}((D,I))}$, keeps dynamically invariant the set $(D_{1},\dots, D_{p^{\prime}})^{-1}(\{(d_1,\dots,d_{p^{\prime}})\})\cap\operatorname{Mrk}((D,I))$ and moreover, the invariant set $(D_{1},\dots, D_{p^{\prime}})^{-1}(\{(d_1,\dots,d_{p^{\prime}})\})\cap (D^{p^{\prime}\rightarrow }|_{\operatorname{Mrk}((D,I))})^{-1}(\{\nu\})\cap(I|_{\operatorname{Mrk}((D,I))})^{-1}(\{\mu\})$ (if not void) is an attracting set of $\mathfrak{S}_{\lambda}|_{(D^{p^{\prime}\rightarrow }|_{\operatorname{Mrk}((D,I))})^{-1}(\{\nu\})\cap(I|_{\operatorname{Mrk}((D,I))})^{-1}(\{\mu\})}$, for every regular value $\mu\in\operatorname{Im}(I|_{\operatorname{Mrk}((D,I))})$, and respectively for every regular value $\nu\in\operatorname{Im}(D^{p^{\prime}\rightarrow }|_{\operatorname{Mrk}((D,I))})$. Let us state now the correspondent of Theorem \eqref{THMcons1} in the above settings.

\begin{theorem}\label{THMcons2}
Let $X\in\mathfrak{X}(U)$ be a smooth vector field defined on an open subset $U\subseteq \mathbb{R}^n$, which admits $k+p$ smooth first integrals ($0< k+p < n$, $k\geq 0$), $I_1,\dots, I_k, D_1,\dots, D_p \in\mathcal{C}^{\infty}(U,\mathbb{R})$. Let $\mathcal{S}\subset U$ be a given closed and invariant set, defined as the level set of the vector type first integral $D:=(D_1,\dots, D_p)\in\mathcal{C}^{\infty}(U,\mathbb{R}^p)$ corresponding to some regular or singular value $d:=(d_1,\dots,d_p)\in\operatorname{Im}(D)$, i.e., $\mathcal{S}= \Sigma^{D_1,\dots, D_p}_{d_1,\dots, d_p}:=D^{-1}(\{(d_1,\dots,d_p)\})$. Let $p^{\prime}\in\mathbb{N}$ be a natural number such that $0 < p^{\prime}\leq p$.

Then, to each $\lambda \geq 0$, we associate a smooth vector field, $X_{\lambda;p^{\prime}}:=X+ X_{0}^{\lambda;p^{\prime}}$, defined on the open set $\operatorname{Mrk}((D,I))\subseteq U$, where the smooth vector field $X_{0}^{\lambda;p^{\prime}} \in \mathfrak{X}(\operatorname{Mrk}((D,I)))$ is given by
\begin{equation*}
X_{0}^{\lambda ;p^{\prime}}=\left\| \bigwedge_{i=1}^{p} \nabla D_i\wedge\bigwedge_{j=1}^{k} \nabla I_j \right\|_{k+p}^{-2}\cdot\sum_{i=1}^{p^{\prime}}(-1)^{n-i}(-\lambda) (D- d_i) \Theta_i, 
\end{equation*}
where
$$
\Theta_i = \star\left[ \bigwedge_{j=1, j\neq i}^{p^{\prime}} \nabla D_j \wedge \bigwedge_{m={p^{\prime}+1}}^{p} \nabla D_{m}\wedge \bigwedge_{l=1}^{k} \nabla I_l  \wedge\star\left(\bigwedge_{j=1}^{p} \nabla D_j\wedge\bigwedge_{l=1}^{k} \nabla I_l \right)\right].
$$
In the above settings, the following assertions hold true.
\begin{itemize}
\item[(a)] $\mathcal{E}(X_{\lambda;p^{\prime}})=\mathcal{E}(X)\cap\mathcal{E}(X^{\lambda;p^{\prime}}_{0})=\mathcal{E}(X)\cap(D_{1},\dots, D_{p^{\prime}})^{-1}(\{(d_1,\dots,d_{p^{\prime}})\})\cap \operatorname{Mrk}((D,I))$, where $\mathcal{E}(Z)$ stands for the set of equilibrium points of the vector field $Z$.
\item[(b)] The set $(D_{1},\dots, D_{p^{\prime}})^{-1}(\{(d_1,\dots,d_{p^{\prime}})\})\cap \operatorname{Mrk}((D,I))$ is a dynamically invariant set of the vector field $X_{\lambda;p^{\prime}}$, for every $\lambda \geq 0$. Moreover, for each regular value $\mu$ of $I|_{\operatorname{Mrk}((D,I))}$ and respectively for each regular value $\nu$ of $D^{p^{\prime}\rightarrow }|_{\operatorname{Mrk}((D,I))}$, the set $(D_{1},\dots, D_{p^{\prime}})^{-1}(\{(d_1,\dots,d_{p^{\prime}})\})\cap (D^{p^{\prime}\rightarrow }|_{\operatorname{Mrk}((D,I))})^{-1}(\{\nu\})\cap(I|_{\operatorname{Mrk}((D,I))})^{-1}(\{\mu\})$ is a closed and dynamically invariant set of the vector field ${X_{\lambda;p^{\prime}}}$, for every $\lambda \geq 0$.
\item [(c)] For each regular value $\mu$ of $I|_{\operatorname{Mrk}((D,I))}$ and respectively for each regular value $\nu$ of $D^{p^{\prime}\rightarrow }|_{\operatorname{Mrk}((D,I))}$, the vector field ${X_{\lambda;p^{\prime}}}|_{(D^{p^{\prime}\rightarrow }|_{\operatorname{Mrk}((D,I))})^{-1}(\{\nu\})\cap(I|_{\operatorname{Mrk}((D,I))})^{-1}(\{\mu\})}$, $\lambda >0$, admits the attracting set $$(D_{1},\dots, D_{p^{\prime}})^{-1}(\{(d_1,\dots,d_{p^{\prime}})\})\cap (D^{p^{\prime}\rightarrow }|_{\operatorname{Mrk}((D,I))})^{-1}(\{\nu\})\cap(I|_{\operatorname{Mrk}((D,I))})^{-1}(\{\mu\}).$$ More precisely, for every $\overline{x} \in (D^{p^{\prime}\rightarrow }|_{\operatorname{Mrk}((D,I))})^{-1}(\{\nu\})\cap(I|_{\operatorname{Mrk}((D,I))})^{-1}(\{\mu\})$, such that the set $\{x(t;\overline{x}): t\geq 0\}$ is bounded, $x(t;\overline{x})\rightarrow (D_{1},\dots, D_{p^{\prime}})^{-1}(\{(d_1,\dots,d_{p^{\prime}})\})\cap (D^{p^{\prime}\rightarrow }|_{\operatorname{Mrk}((D,I))})^{-1}(\{\nu\})\cap(I|_{\operatorname{Mrk}((D,I))})^{-1}(\{\mu\})$ for $t\rightarrow \infty$. 
\item [(d)] Suppose there exists $\mu$, a regular value of $I|_{\operatorname{Mrk}((D,I))}$, and respectively $\nu$, a regular value of $D^{p^{\prime}\rightarrow }|_{\operatorname{Mrk}((D,I))}$, such that the intersection of some connected component of $$(D^{p^{\prime}\rightarrow }|_{\operatorname{Mrk}((D,I))})^{-1}(\{\nu\})\cap(I|_{\operatorname{Mrk}((D,I))})^{-1}(\{\mu\})$$ with the invariant set $(D_{1},\dots, D_{p^{\prime}})^{-1}(\{(d_1,\dots,d_{p^{\prime}})\})\cap\operatorname{Mrk}((D,I))$, contains a unique orbit $\gamma$ of $X$. Then $\gamma$ preserves its nature as an orbit of $${X_{\lambda;p^{\prime}}}|_{(D^{p^{\prime}\rightarrow }|_{\operatorname{Mrk}((D,I))})^{-1}(\{\nu\})\cap(I|_{\operatorname{Mrk}((D,I))})^{-1}(\{\mu\})}$$ (for every $\lambda >0$), and moreover, attracts every bounded positive orbit of $${X_{\lambda;p^{\prime}}}|_{(D^{p^{\prime}\rightarrow }|_{\operatorname{Mrk}((D,I))})^{-1}(\{\nu\})\cap(I|_{\operatorname{Mrk}((D,I))})^{-1}(\{\mu\})},$$ sharing the same connected component.
\item [(e)] Let $\mu$ be an arbitrary fixed regular value of the vector type first integral $I|_{\operatorname{Mrk}((D,I))}$, and let $\nu$ be an arbitrary fixed regular value of $D^{p^{\prime}\rightarrow }|_{\operatorname{Mrk}((D,I))}$. Then for every positively invariant compact set $K\subset (D^{p^{\prime}\rightarrow }|_{\operatorname{Mrk}((D,I))})^{-1}(\{\nu\})\cap(I|_{\operatorname{Mrk}((D,I))})^{-1}(\{\mu\})$, and for every $\overline{x}\in K$ we have that $\omega(\overline{x})\subseteq (D_{1},\dots, D_{p^{\prime}})^{-1}(\{(d_1,\dots,d_{p^{\prime}})\})\cap (D^{p^{\prime}\rightarrow }|_{\operatorname{Mrk}((D,I))})^{-1}(\{\nu\})\cap(I|_{\operatorname{Mrk}((D,I))})^{-1}(\{\mu\})$, and consequently $x(t;\overline{x})$ approaches $(D_{1},\dots, D_{p^{\prime}})^{-1}(\{(d_1,\dots,d_{p^{\prime}})\})\cap (D^{p^{\prime}\rightarrow }|_{\operatorname{Mrk}((D,I))})^{-1}(\{\nu\})\cap(I|_{\operatorname{Mrk}((D,I))})^{-1}(\{\mu\})$ for $t\rightarrow \infty$. Particularly, if $I|_{\operatorname{Mrk}((D,I))}$ or $D^{p^{\prime}\rightarrow }|_{\operatorname{Mrk}((D,I))}$ are proper maps, then the set $(D^{p^{\prime}\rightarrow }|_{\operatorname{Mrk}((D,I))})^{-1}(\{\nu\})\cap(I|_{\operatorname{Mrk}((D,I))})^{-1}(\{\mu\})$ is compact in $\operatorname{Mrk}((D,I))$, dynamically invariant, and consequently for every $\overline{x}\in (D^{p^{\prime}\rightarrow }|_{\operatorname{Mrk}((D,I))})^{-1}(\{\nu\})\cap(I|_{\operatorname{Mrk}((D,I))})^{-1}(\{\mu\})$, we have that $x(t;\overline{x})$ approaches $(D_{1},\dots, D_{p^{\prime}})^{-1}(\{(d_1,\dots,d_{p^{\prime}})\})\cap (D^{p^{\prime}\rightarrow }|_{\operatorname{Mrk}((D,I))})^{-1}(\{\nu\})\cap(I|_{\operatorname{Mrk}((D,I))})^{-1}(\{\mu\})$ for $t\rightarrow \infty$.
\item [(f)] Suppose there exists $\mu$ and $\nu$, two regular values of $I|_{\operatorname{Mrk}((D,I))}$ and respectively $D^{p^{\prime}\rightarrow }|_{\operatorname{Mrk}((D,I))}$, such that the intersection of some connected component of $$(D^{p^{\prime}\rightarrow }|_{\operatorname{Mrk}((D,I))})^{-1}(\{\nu\})\cap(I|_{\operatorname{Mrk}((D,I))})^{-1}(\{\mu\})$$ with the invariant set $(D_{1},\dots, D_{p^{\prime}})^{-1}(\{(d_1,\dots,d_{p^{\prime}})\})\cap\operatorname{Mrk}((D,I))$, contains isolated points. Then each such a point is an asymptotically stable equilibrium point of the vector field $X_{\lambda;p^{\prime}}|_{(D^{p^{\prime}\rightarrow }|_{\operatorname{Mrk}((D,I))})^{-1}(\{\nu\})\cap(I|_{\operatorname{Mrk}((D,I))})^{-1}(\{\mu\})}$, for every $\lambda > 0$.
\end{itemize}
\end{theorem}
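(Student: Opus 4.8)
The plan is to follow verbatim the strategy of Theorem \ref{THMcons1}, replacing the full squared-distance function by its truncation to the first $p^{\prime}$ dissipated components. Concretely, I would introduce the smooth function $\widetilde{F}:\operatorname{Mrk}((D,I))\rightarrow[0,\infty)$ defined by
\[
\widetilde{F}(x):=(D_1(x)-d_1)^2+\dots+(D_{p^{\prime}}(x)-d_{p^{\prime}})^2,
\]
whose zero set inside $\operatorname{Mrk}((D,I))$ is precisely $(D_{1},\dots, D_{p^{\prime}})^{-1}(\{(d_1,\dots,d_{p^{\prime}})\})\cap \operatorname{Mrk}((D,I))$. The first step is to compute $\mathcal{L}_{X_{\lambda;p^{\prime}}}\widetilde{F}$. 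Since $X$ conserves all the $D_i$ and $I_j$, while $X_{0}^{\lambda;p^{\prime}}$ satisfies the system \eqref{SVFcp} by construction, Theorem \ref{thmdin} guarantees that $X_{\lambda;p^{\prime}}=X+X_{0}^{\lambda;p^{\prime}}$ satisfies \eqref{SVFcp} as well. Expanding exactly as in \eqref{lderc} then yields $\mathcal{L}_{X_{\lambda;p^{\prime}}}\widetilde{F}=(-2\lambda)\widetilde{F}$, and hence, along any integral curve, $\widetilde{F}(x(t;\overline{x}))=\exp(-2\lambda t)\,\widetilde{F}(\overline{x})$ on the maximal interval $I_{\overline{x}}$.

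With this decay relation in hand, every item is obtained by mimicking the corresponding item of Theorem \ref{THMcons1}. For $(a)$ I would combine Theorem \ref{mainTHMconservp}$(a)$, which identifies $\mathcal{E}(X_{0}^{\lambda;p^{\prime}})$, with the double-inclusion argument: an equilibrium of $X_{\lambda;p^{\prime}}$ forces $\widetilde{F}$ to be constant in $t$, hence zero, placing it in $\mathcal{E}(X_{0}^{\lambda;p^{\prime}})$, after which $X=X_{\lambda;p^{\prime}}-X_{0}^{\lambda;p^{\prime}}$ vanishes there too. Item $(b)$ follows from invariance of the zero set of $\widetilde{F}$ under the decay relation, together with the fact that $I|_{\operatorname{Mrk}((D,I))}$ and $D^{p^{\prime}\rightarrow}|_{\operatorname{Mrk}((D,I))}$ are first integrals of $X_{\lambda;p^{\prime}}$ (again by Theorem \ref{thmdin}), so that their common level sets are closed and invariant. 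For $(c)$ I would pick $y\in\omega(\overline{x})$, use the decay relation along a time sequence $t_n\rightarrow\infty$ to conclude $\widetilde{F}(y)=0$, and then invoke closedness and invariance of the leaf $(D^{p^{\prime}\rightarrow}|_{\operatorname{Mrk}((D,I))})^{-1}(\{\nu\})\cap(I|_{\operatorname{Mrk}((D,I))})^{-1}(\{\mu\})$ to keep $y$ inside it, thereby showing that $\omega(\overline{x})$ lies in the claimed attracting set.

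The remaining items are formal consequences. Item $(d)$ uses that any orbit $\gamma$ of $X$ contained in the invariant set lies in $\mathcal{E}(X_{0}^{\lambda;p^{\prime}})$, so $X_{0}^{\lambda;p^{\prime}}$ vanishes on $\gamma$ and $X_{\lambda;p^{\prime}}=X$ there, which preserves the nature of $\gamma$; the attraction statement is then exactly $(c)$. Item $(e)$ is immediate from $(c)$ once one observes that a positively invariant compact set forces boundedness of positive orbits, while the proper-map case yields compactness of the leaf directly. Item $(f)$ is handled as in Proposition \ref{propok}$(b)$ and Theorem \ref{THMcons1}$(f)$, by restricting $\widetilde{F}$ to a small neighborhood of an isolated point within the leaf and verifying that it is a strict Lyapunov function. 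The one point demanding care throughout is the bookkeeping of which first integrals survive: only $D_1,\dots,D_{p^{\prime}}$ are dissipated, so the attracting set is a $(d_1,\dots,d_{p^{\prime}})$-level set, and one must systematically intersect with the conserved leaves of both $I$ and $D^{p^{\prime}\rightarrow}$ to ensure that the $\omega$-limit set cannot escape the leaf on which the restricted system lives.
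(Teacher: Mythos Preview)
Your proposal is correct and matches the paper's own proof essentially verbatim: the paper's proof of Theorem \ref{THMcons2} is the one-line remark that it ``follows mimetically that of Theorem \eqref{THMcons1}'', and the companion Theorem \ref{mainTHMconservp} already introduces precisely your truncated function $\widetilde{F}(x)=\sum_{i=1}^{p^{\prime}}(D_i(x)-d_i)^2$ as the replacement for $F$. Your care in noting that both $I$ and $D^{p^{\prime}\rightarrow}$ are conserved by $X_{\lambda;p^{\prime}}$, so that $\omega$-limit sets remain in the corresponding joint level sets, is exactly the extra bookkeeping the foliated setting requires.
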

\begin{proof}
The proof follows mimetically that of Theorem \eqref{THMcons1}.
\end{proof}

\begin{remark}
In the hypothesis of Theorem \eqref{THMcons2}, if $\nu=(d_{p^{\prime}+1},\dots,d_p)$ is a regular value of $(D_{p^{\prime}+1},\dots, D_p)$, then $$(D_{1},\dots, D_{p^{\prime}})^{-1}(\{(d_1,\dots,d_{p^{\prime}})\})\cap (D^{p^{\prime}\rightarrow }|_{\operatorname{Mrk}((D,I))})^{-1}(\{\nu\})=\Sigma^{D_1,\dots,D_p}_{d_1,\dots,d_p}\cap\operatorname{Mrk}((D,I)),$$ and moreover, the items $(b),(c),(d),(e)$ and $(f)$ of Theorem \eqref{THMcons2}, become as follows:
\begin{itemize}
\item[(b')] The set $(D_{1},\dots, D_{p^{\prime}})^{-1}(\{(d_1,\dots,d_{p^{\prime}})\})\cap \operatorname{Mrk}((D,I))$ is a dynamically invariant set of the vector field $X_{\lambda;p^{\prime}}$, for every $\lambda \geq 0$. Moreover, for each regular value $\mu$ of $I|_{\operatorname{Mrk}((D,I))}$, the set $\Sigma^{D_1,\dots,D_p}_{d_1,\dots,d_p}\cap(I|_{\operatorname{Mrk}((D,I))})^{-1}(\{\mu\})$ is a closed and dynamically invariant set of the vector field ${X_{\lambda;p^{\prime}}}$, for every $\lambda \geq 0$.
\item [(c')] For each regular value $\mu$ of $I|_{\operatorname{Mrk}((D,I))}$, the vector field $${X_{\lambda;p^{\prime}}}|_{(D^{p^{\prime}\rightarrow }|_{\operatorname{Mrk}((D,I))})^{-1}(\{(d_{p^{\prime}+1},\dots,d_p)\})\cap(I|_{\operatorname{Mrk}((D,I))})^{-1}(\{\mu\})}, ~\lambda >0,$$ admits the attracting set $\Sigma^{D_1,\dots,D_p}_{d_1,\dots,d_p}\cap(I|_{\operatorname{Mrk}((D,I))})^{-1}(\{\mu\}).$ More precisely, for every $\overline{x} \in (D^{p^{\prime}\rightarrow }|_{\operatorname{Mrk}((D,I))})^{-1}(\{(d_{p^{\prime}+1},\dots,d_p)\})\cap(I|_{\operatorname{Mrk}((D,I))})^{-1}(\{\mu\})$, such that the set $\{x(t;\overline{x}): t\geq 0\}$ is bounded, $x(t;\overline{x})\rightarrow \Sigma^{D_1,\dots,D_p}_{d_1,\dots,d_p}\cap(I|_{\operatorname{Mrk}((D,I))})^{-1}(\{\mu\})$ for $t\rightarrow \infty$.
\item [(d')] Suppose there exists $\mu$, a regular value of $I|_{\operatorname{Mrk}((D,I))}$, such that the intersection of some connected component of $$(D^{p^{\prime}\rightarrow }|_{\operatorname{Mrk}((D,I))})^{-1}(\{(d_{p^{\prime}+1},\dots,d_p)\})\cap(I|_{\operatorname{Mrk}((D,I))})^{-1}(\{\mu\})$$ with the invariant set $(D_{1},\dots, D_{p^{\prime}})^{-1}(\{(d_1,\dots,d_{p^{\prime}})\})\cap\operatorname{Mrk}((D,I))$, contains a unique orbit $\gamma$ of $X$. Then $\gamma$ preserves its nature as an orbit of $${X_{\lambda;p^{\prime}}}|_{(D^{p^{\prime}\rightarrow }|_{\operatorname{Mrk}((D,I))})^{-1}(\{(d_{p^{\prime}+1},\dots,d_p)\})\cap(I|_{\operatorname{Mrk}((D,I))})^{-1}(\{\mu\})}$$ (for every $\lambda >0$), and moreover, attracts every bounded positive orbit of $${X_{\lambda;p^{\prime}}}|_{(D^{p^{\prime}\rightarrow }|_{\operatorname{Mrk}((D,I))})^{-1}(\{(d_{p^{\prime}+1},\dots,d_p)\})\cap(I|_{\operatorname{Mrk}((D,I))})^{-1}(\{\mu\})},$$ sharing the same connected component.
\item [(e')] Let $\mu$ be an arbitrary fixed regular value of the vector type first integral $I|_{\operatorname{Mrk}((D,I))}$. Then for every positively invariant compact set $$K\subset (D^{p^{\prime}\rightarrow }|_{\operatorname{Mrk}((D,I))})^{-1}(\{(d_{p^{\prime}+1},\dots,d_p)\})\cap(I|_{\operatorname{Mrk}((D,I))})^{-1}(\{\mu\}),$$ and for every $\overline{x}\in K$ we have that $\omega(\overline{x})\subseteq \Sigma^{D_1,\dots,D_p}_{d_1,\dots,d_p}\cap(I|_{\operatorname{Mrk}((D,I))})^{-1}(\{\mu\})$, and consequently $x(t;\overline{x})$ approaches $\Sigma^{D_1,\dots,D_p}_{d_1,\dots,d_p}\cap(I|_{\operatorname{Mrk}((D,I))})^{-1}(\{\mu\})$ for $t\rightarrow \infty$. Particularly, if $I|_{\operatorname{Mrk}((D,I))}$ or $D^{p^{\prime}\rightarrow }|_{\operatorname{Mrk}((D,I))}$ are proper maps, then the set $$(D^{p^{\prime}\rightarrow }|_{\operatorname{Mrk}((D,I))})^{-1}(\{(d_{p^{\prime}+1},\dots,d_p)\})\cap(I|_{\operatorname{Mrk}((D,I))})^{-1}(\{\mu\})$$ is compact in $\operatorname{Mrk}((D,I))$, dynamically invariant, and consequently for every $\overline{x}\in (D^{p^{\prime}\rightarrow }|_{\operatorname{Mrk}((D,I))})^{-1}(\{(d_{p^{\prime}+1},\dots,d_p)\})\cap(I|_{\operatorname{Mrk}((D,I))})^{-1}(\{\mu\})$, we have that $x(t;\overline{x})$ approaches $\Sigma^{D_1,\dots,D_p}_{d_1,\dots,d_p}\cap(I|_{\operatorname{Mrk}((D,I))})^{-1}(\{\mu\})$ for $t\rightarrow \infty$.
\item [(f')] Suppose there exists $\mu$ a regular value of $I|_{\operatorname{Mrk}((D,I))}$ such that the intersection of some connected component of $$(D^{p^{\prime}\rightarrow }|_{\operatorname{Mrk}((D,I))})^{-1}(\{(d_{p^{\prime}+1},\dots,d_p)\})\cap(I|_{\operatorname{Mrk}((D,I))})^{-1}(\{\mu\})$$ with the invariant set $(D_{1},\dots, D_{p^{\prime}})^{-1}(\{(d_1,\dots,d_{p^{\prime}})\})\cap\operatorname{Mrk}((D,I))$, contains isolated points. Then each such a point is an asymptotically stable equilibrium point of the vector field $X_{\lambda;p^{\prime}}|_{(D^{p^{\prime}\rightarrow }|_{\operatorname{Mrk}((D,I))})^{-1}(\{(d_{p^{\prime}+1},\dots,d_p)\})\cap(I|_{\operatorname{Mrk}((D,I))})^{-1}(\{\mu\})}$, for every $\lambda > 0$.
\end{itemize}
\end{remark}

%\subsection*{Acknowledgment}
%This work was supported by a grant of the Romanian National Authority for Scientific Research, CNCS-UEFISCDI, project number PN-II-RU-TE-2011-3-0103. 

\bigskip
\bigskip

\noindent {\sc R.M. Tudoran}\\
West University of Timi\c soara\\
Faculty of Mathematics and Computer Science\\
Department of Mathematics\\
Blvd. Vasile P\^arvan, No. 4\\
300223 - Timi\c soara, Rom\^ania.\\
E-mail: {\sf tudoran@math.uvt.ro}\\
\medskip

\end{document}